\documentclass[leqno,11pt]{amsart}

\usepackage{amssymb, amsmath}
\textheight215mm
\textwidth444pt
\oddsidemargin-1mm
\evensidemargin-1mm
\setlength{\parindent }{ 0pt }
\setlength{\parskip }{7pt plus 2pt}
%%%%%%%%%%%%%%%%%%%%%%%%%%%%%%%%%%
%%%%%%%%%%%%%%%%%%%%%%%%%%%%%%%%%%
%MACROS AVEC ARGUMENTS%

\def\refer#1{~\ref{#1}}
\def\refeq#1{~(\ref{#1})}
\def\ccite#1{~\cite{#1}}

\def\longformule#1#2{
\displaylines{ \qquad{#1} \hfill\cr \hfill {#2} \qquad\cr } }
\def\inte#1{
\displaystyle\mathop{#1\kern0pt}^\circ }
\def\sumetage#1#2{
\sum_{\scriptstyle {#1}\atop\scriptstyle {#2}} }

\def\supetage#1#2{
\sup_{\scriptstyle {#1}\atop\scriptstyle {#2}} }

%MACROS MECAFLU%

%ABREVIATIONS%

\let\al=\alpha

\let\e=\varepsilon

\let\lam=\lambda

\let\s=\sigma
\let\f=\phi

\let\wh=\widehat

%LETTRES RONDES

\def\cF{{\mathcal F}}

\def\cI{{\mathcal I}}

\def\cS{{\mathcal S}}

%MACROS SANS ARGUMENTS
\def\with{\quad\hbox{with}\quad}
\def\and{\quad\hbox{and}\quad}

\def\virgp{\raise 2pt\hbox{,}}
\def\cdotpv{\raise 2pt\hbox{;}}

\def\eqdefa{\buildrel\hbox{{\rm \footnotesize def}}\over =}

\def\C{\mathop{\mathbb  C\kern 0pt}\nolimits}
\def\DD{\mathop{\mathbb  D\kern 0pt}\nolimits}
\def\K{\mathop{\mathbb  K\kern 0pt}\nolimits}
\def\N{\mathop{\mathbb  N\kern 0pt}\nolimits}
\def\Q{\mathop{\mathbb  Q\kern 0pt}\nolimits}
\def\R{\mathop{\mathbb  R\kern 0pt}\nolimits}
\def\SS{\mathop{\mathbb  S\kern 0pt}\nolimits}
\def\ZZ{\mathop{\mathbb  Z\kern 0pt}\nolimits}
\def\T{\mathop{\mathbb T\kern 0pt}\nolimits}
\def\P{\mathop{\mathbb\kern 0pt}\nolimits}

\newcommand{\ds}{\displaystyle}

\def\dive{\mathop{\rm div}\nolimits}

% MACRO EN ANGLAIS SANS ARGUMENTS

%\newcommand{}{ \hfill $ \blacksquare $ \vskip 3mm}
\newcommand{\beq}{\begin{equation}}
\newcommand{\eeq}{\end{equation}}
\newcommand{\ben}{\begin{eqnarray}}
\newcommand{\een}{\end{eqnarray}}
\newcommand{\beno}{\begin{eqnarray*}}
\newcommand{\eeno}{\end{eqnarray*}}
%%%%%%%%%%%%%%%%%%%%%%%%%%%%%%%%%%

\newtheorem{theo}{Theorem}
\newtheorem{lemma}{Lemma}[section]

\newtheorem{corol}{Corollary}[section]
\newtheorem{prop}{Proposition}[section]

%%%%%%%%%%%%
\def\vp{{\underline v}}
\def\presspO{{{\underline p}_0}}
\def\presspun{{{\underline p}_1}}
\def\wp{{\underline w}}
\def\wpe{{\underline w}^{\e}}
\def\vapp{v_{app}^\e}

\begin{document}
%%%%%%%%%%%%%%%%%%%%%%%%%%%%%%%%%%%%%%%%%%%%%%%%%%%%%%%%%%%%%%%%%%%%%%
%%  Text                                                            %%
%%%%%%%%%%%%%%%%%%%%%%%%%%%%%%%%%%%%%%%%%%%%%%%%%%%%%%%%%%%%%%%%%%%%%%

\title[Large, global solutions to the  Navier-Stokes  equations]{Large, global solutions to the 
 Navier-Stokes  equations,   slowly varying in one direction}

\author[J.-Y. Chemin]{Jean-Yves  Chemin}
\address[J.-Y. Chemin]%
{ Laboratoire J.-L. Lions UMR 7598\\ Universit{\'e} Paris VI\\
175, rue du Chevaleret\\ 75013 Paris\\FRANCE }
\email{chemin@ann.jussieu.fr }
\author[I. Gallagher]{Isabelle Gallagher}
\address[I. Gallagher]%
{ Institut de Math{\'e}matiques de Jussieu UMR 7586\\ Universit{\'e} Paris VII\\
175, rue du Chevaleret\\ 75013 Paris\\FRANCE }
\email{Isabelle.Gallagher@math.jussieu.fr}

\date{}

\begin{abstract}
In~\cite{cgens}  and\ccite{cg2}  classes   of initial data to the three dimensional, incompressible Navier-Stokes
 equations were presented, generating a global smooth solution although the norm of 
the initial data may be chosen arbitrarily   large. The aim of this article is to provide new examples of arbitrarily 
large initial data giving rise
 to global solutions, in the whole space. Contrary to the previous examples, the initial data has no particular oscillatory properties, 
 but varies slowly in one direction.  The proof uses the special structure of the nonlinear
 term of the equation.
\end{abstract}

\keywords {Navier-Stokes equations, global wellposedness.}

\maketitle

%%%%%%%%%%%%%%%%%%%%%%%%%%%%%%%%%%%%%%%%%%%%%%%%%%%%%%%%%%%%%
%%% INTRODUCTION
%%%%%%%%%%%%%%%%%%%%%%%%%%%%%%%%%%%%%%%%%%%%%%%%%%%%%%%%%%%%%
\setcounter{equation}{0}
\section{Introduction}
The purpose of this paper  is to use the special structure of the tridimensional Navier-Stokes equations to 
prove  the global existence of smooth solutions for a class of (large) initial data which are slowly varying  in one direction.
Before entering further in the details, let us recall briefly some classical facts on the global wellposedness of the incompressible 
Navier-Stokes equations  in the whole space~$\R^3$. The equation  itself writes
$$
(NS) \left\{
\begin{array}{c}
\partial_t u +u\cdot \nabla u -\Delta u  =-\nabla p\\
\dive u = 0\\
u_{|t=0} = u_0
\end{array}
\right.
$$ 
where~$u=(u^1,u^2,u^3)= (u^h,u^3)$ is a time dependent vector field on~$\R^3$. The divergence free condition determines $p$ through the relation
$$
 -\Delta p = \sum_{1\leq j,k\leq 3} \partial_j\partial_k(u^ju^k).
$$
This relation allows to put the system~$(NS)$ under the more general form 
$$
(GNS) \left\{
\begin{array}{c}
\partial_t u  -\Delta u =Q(u,u) \\
u_{|t=0} = u_0
\end{array}
\right.
$$
where $\displaystyle Q(v,w)\eqdefa \sum_{1\leq j,k\leq 3} Q_{j,k} (D) (v^jw^k)$ and~$Q_{j,k}(D)$ are smooth
 homogeneous Fourier multipliers of order~$1$.

Moreover, this system has the following scaling invariance: if~$(u,p)$ is a solution on the time
 interval~$[0,T)$, then~$(u_\lam,p_\lam)$ defined by
$$
u_\lam(t,x)\eqdefa \lam u(\lam^2t,\lam x) \and\quad p_\lam (t,x)\eqdefa \lam^2p(\lam^2t,\lam x)
$$
is a solution on the time interval~$[0,\lam^{-2}T)$.
  Of course, any smallness condition on the initial
data that ensures global solutions, must be invariant under the above scaling transformation. The search 
of the ``best" smallness condition is a long story initiated in the seminal paper of J. Leray (see\ccite{leray}), 
continuated in particular by H. Fujita and T. Kato in\ccite{fujitakato}, Y.   Giga  and T. Miyakawa in~\cite{gigamiyakawa},  and M. Cannone, Y. Meyer and F. Planchon
 in\ccite{cannonemeyerplanchon}. This leads to the following theorem proved by H. Koch and D. Tataru in\ccite{kochtataru}. In the
 statement of the theorem, $P(x,R)$ stands for the parabolic set~$[0,R^{2}] \times B(x,R)$
 where~Ê$B(x,R)$ is the ball centered at~$x$, of radius~$R.$
 \begin{theo}[\cite{kochtataru}]
\label{Kochtatarutheo}
{\sl If the initial data~$u_0$ is such that 
\begin{equation}\label{defbmo}
\|u_0\|^2_{BMO^{-1}}\eqdefa \sup_{t>0} t \| e^{t\Delta}u_0\|^2_{L^\infty}
+\supetage {x\in \R^3}{R>0} \frac 1 {R^3} \int_{P(x,R)} |(e^{t\Delta} u_0)(t,y)|^2dy
\end{equation}
is small enough, then there exists a global smooth solution to~$(GNS)$.
}
\end{theo}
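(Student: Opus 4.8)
The plan is to recast $(GNS)$ as a fixed-point problem and solve it by a contraction argument in a scale-invariant space tailored to $BMO^{-1}$. Duhamel's formula turns $(GNS)$ into the integral equation
\beq
\label{mild}
u = e^{t\Delta}u_0 + \cB(u,u), \qquad \cB(v,w)(t)\eqdefa \int_0^t e^{(t-s)\Delta}Q(v(s),w(s))\,ds,
\eeq
where $\cB$ becomes a continuous bilinear map once the right space is chosen. I would then invoke the standard fixed-point lemma: if $(X,\norm{\cdot}{X})$ is a Banach space on which $\norm{\cB(v,w)}{X}\leq C_0\norm{v}{X}\norm{w}{X}$, then for every $u_L$ with $\norm{u_L}{X}<1/(4C_0)$ the equation $u=u_L+\cB(u,u)$ has a unique solution in the ball of radius $1/(2C_0)$, obtained as the limit of the Picard iterates $u_{n+1}=u_L+\cB(u_n,u_n)$. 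Here $u_L\eqdefa e^{t\Delta}u_0$, and since we work over the whole half-line $t>0$, a solution produced this way is automatically global.

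The first substantive choice is the space. I would take the Koch--Tataru space $X$ of functions $u$ on $(0,\infty)\times\R^3$ with
\beq
\label{ktnorm}
\norm{u}{X}\eqdefa \sup_{t>0}\sqrt t\,\normsup{u(t)} + \supetage{x\in\R^3}{R>0}\Bigl(\frac 1{R^3}\int_{P(x,R)}|u(t,y)|^2\,dy\,dt\Bigr)^{1/2}.
\eeq
The two summands are deliberately modelled on the two terms of~\refeq{defbmo}, so the required linear bound $\norm{e^{t\Delta}u_0}{X}\lesssim \norm{u_0}{BMO^{-1}}$ is essentially an identity: the first summand of $\norm{e^{t\Delta}u_0}{X}$ is the square root of the first term of $\norm{u_0}{BMO^{-1}}^2$, and the Carleson summand is the square root of the second. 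This step should cost nothing.

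The crux is the bilinear estimate $\norm{\cB(v,w)}{X}\lesssim \norm{v}{X}\norm{w}{X}$, and here the structure of $Q$ is decisive: because each $Q_{j,k}(D)$ is homogeneous of order $1$, the operator $e^{(t-s)\Delta}Q_{j,k}(D)$ has a parabolically scaled kernel of $L^1$-size $\sim(t-s)^{-1/2}$, so the nonlinearity costs exactly the half-derivative that must be bought back from the heat smoothing. The naive attempt to control the $\sqrt t\,\normsup{\cdot}$ part via $\normsup{v(s)w(s)}\leq s^{-1}\norm{v}{X}\norm{w}{X}$ fails, since $\int_0^t (t-s)^{-1/2}s^{-1}\,ds$ diverges logarithmically at $s=0$; this is the heart of the matter and the reason $BMO^{-1}$ sits at the edge of what is reachable. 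The remedy, and the main technical obstacle, is that the $L^\infty$ and Carleson parts must be closed together: for source times $s$ comparable to $t$ the pointwise bound is integrable and harmless, while for small $s$ one discards the crude product bound and instead exploits that $|v|^2\,dy\,ds$ and $|w|^2\,dy\,ds$ are Carleson measures, combined with the off-diagonal (Gaussian-type) decay of the kernel, to gain the missing power of $t$. For the Carleson summand of $\norm{\cB(v,w)}{X}$ I would fix a parabolic box $P(x_0,R)$ and split the source $Q(v,w)$ into a part localized to a fixed dilate of the box and a far part; the local piece is controlled by the $L^2\!\to\!L^2$ boundedness of the parabolic singular integral $vw\mapsto\int_0^t e^{(t-s)\Delta}Q(D)(\cdot)\,ds$ together with $\int_{P(x_0,2R)}|v|^2\lesssim R^3\norm{v}{X}^2$, while the far piece is absorbed through the kernel decay and a Carleson-measure duality argument. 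Making all the scaling exponents close in this local/far bookkeeping is where essentially all the work lies.

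Once a solution $u\in X$ is obtained, upgrading it to a smooth solution is routine. The bound $\sqrt t\,\normsup{u(t)}\lesssim\norm{u_0}{BMO^{-1}}$ already gives local boundedness for $t>0$; inserting it into~\refeq{mild}, differentiating the heat kernel, and running an induction on the quantities $\sup_{t>0} t^{(1+|\alpha|)/2}\norm{\partial^\alpha u(t)}{L^\infty}$ yields arbitrary spatial regularity away from $t=0$, after which the equation furnishes time regularity. Uniqueness in the ball is supplied by the fixed-point lemma. Thus the only genuine difficulty is the bilinear Carleson estimate; everything else is either definitional or a standard parabolic bootstrap.
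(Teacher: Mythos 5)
The paper does not prove this theorem: it is quoted verbatim from Koch and Tataru's article, so there is no in-paper argument to compare yours against. Judged on its own terms, your outline is exactly the strategy of the original proof -- Duhamel, Picard iteration in the scale-invariant space whose norm combines $\sup_{t>0}\sqrt t\,\|u(t)\|_{L^\infty}$ with the Carleson-measure quantity, the observation that the linear estimate is definitional, and the local/far splitting of the bilinear term driven by the fact that $|u|^2\,dy\,dt$ is a Carleson measure. You also correctly diagnose the precise failure of the naive estimate (the logarithmic divergence of $\int_0^t(t-s)^{-1/2}s^{-1}\,ds$) and why the $L^\infty$ and Carleson components must be closed simultaneously; and you rightly note that nothing in the argument uses the divergence-free structure, so it applies to all of $(GNS)$, consistent with the paper's remark to that effect.

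The genuine gap is that the bilinear estimate $\|\cB(v,w)\|_X\lesssim\|v\|_X\|w\|_X$ -- which you yourself identify as ``where essentially all the work lies'' -- is described but never established. That estimate \emph{is} the theorem: everything else (the fixed-point lemma, the linear bound, the parabolic bootstrap for smoothness) is routine, and you treat it as such correctly. In particular, for the $L^\infty$ component you need to actually produce the pointwise kernel bound for $e^{(t-s)\Delta}Q_{j,k}(D)$ with enough off-diagonal decay to sum the contributions of a lattice of parabolic boxes of scale $\sqrt t$ against the Carleson bound on $\int_{P(x,\sqrt t)}|v|\,|w|$, and for the Carleson component you need the $L^2$ boundedness of the retarded parabolic singular integral on the local piece together with a quantitative far-field decay; none of the exponents are checked. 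As it stands this is a correct and well-informed road map of the Koch--Tataru proof, not a proof.
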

A typical example of application  of this theorem is the initial data
\beq
\label{examplekt}
u_{0}^{\e}(x)\eqdefa \cos \Bigl( \frac {x_{3}}\e\Bigr) (\partial_{2}\f
(x_{1},x_{2}),-\partial_{1}\f(x_{1},x_{2}),0) \with \f \in {\mathcal S}(\R^2),
\eeq
as soon as~$\e$ is small enough (see for example\ccite{cg2} for a proof).
The above theorem is probably the end point for the following reason, as observed for instance in\ccite{cg2}.
 If~$B$ is  a Banach space
continuously included in the space~$\cS'$ of tempered distributions on~$\R^3$, such  
that, for any~$(\lam,  a)\in \R^+_{\star}\times \R^3$, Ê$\|f(\lam(\cdot-  a))\|_{B} 
 = \lam^{-1}\|f\|_{B}$, then~$\|\cdot\|_B\leq C \ds \sup_{t>0}  t^{\frac 1 2} \| e^{t\Delta}u_0\|_{L^\infty}$.  
The second 
 condition entering in the definition of the~$BMO^{-1}$ norm given 
in~(\ref{defbmo}) merely translates the fact that the first Picard iterate should be locally square integrable in space and time.

Those results of global existence under a smallness condition do not use the special structure of the incompressible 
Navier-Stokes system and are valid for the larger class of systems of the type~$(GNS)$. The purpose of this paper is to provide   
a class of examples of large initial data which give rise to global smooth solutions for the system~$(NS)$ itself,  and 
not
for the larger class~$(GNS)$. In all that follows, an initial data $u_0$ 
will be said ``large"  if 
\begin{equation}\label{defB-1infty}
\|u_0\|_{\dot B^{-1}_{\infty,\infty}} \eqdefa \sup_{t>0}  t^{\frac 1 2} \|e^{t\Delta}u_0\|_{L^\infty}
\end{equation}
is not small.

Such initial data, in the spirit of the example provided by\refeq{examplekt}, are exhibited in\ccite{cg2}
 (see also\ccite{cgens} for the periodic case). In particular, the following theorem is proved in\ccite{cg2}.
\begin{theo}[\cite{cg2}]
\label{example}
{\sl Let~ $\phi \in {\mathcal S}(\R^3)$ be a given function, and consider two real
numbers~$\varepsilon$ and~$\alpha$
in~$]0,1[$. Define
$$
\varphi^{\varepsilon}(x) = \frac{({-\log \varepsilon})^{\frac15}}
{\e^{1-\alpha}} \cos  \left(
\frac{x_{3}}{\e}\right)
\phi\Bigl(x_{1}, \frac{x_{2}}{\varepsilon^{\alpha}}, x_{3}\Bigr).
$$
Then  for~$\varepsilon$ small enough, the smooth,
divergence free vector field
$$
u_{0}^{\e}(x) = (\partial_{2}\varphi^{\varepsilon}(x),
-\partial_{1}\varphi^{\varepsilon}(x), 0)
$$
satisfies~$\displaystyle \lim_{\e \rightarrow 0}  \|u_{0}^{\varepsilon}\|_{\dot B^{-1}_{\infty,\infty}}
= \infty$,  
and generates a unique global solution to~$(NS)$.
}\end{theo}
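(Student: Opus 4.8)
The plan is to establish the two assertions separately: the divergence of the $\dot B^{-1}_{\infty,\infty}$ norm is a direct computation on the free evolution, whereas global existence rests on the special algebraic structure of the $(NS)$ nonlinearity together with the global wellposedness of the two‑dimensional equations. First I would estimate $\|u_0^\e\|_{\dot B^{-1}_{\infty,\infty}}=\sup_{t>0}t^{1/2}\|e^{t\Delta}u_0^\e\|_{L^\infty}$. The dominant component of $u_0^\e$ is its first one, $\partial_2\varphi^\e$, whose amplitude is of order $(-\log\e)^{1/5}\e^{-1}$, because the derivative $\partial_2$ falls on the slow variable $x_2/\e^\alpha$ and produces the factor $\e^{-\alpha}$. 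Since the whole datum carries the frequency $\e^{-1}$ in $x_3$ through $\cos(x_3/\e)$, the heat semigroup damps it like $e^{-ct/\e^2}$; testing at $t=\e^2/2$ gives $t^{1/2}\|e^{t\Delta}u_0^\e\|_{L^\infty}\gtrsim(-\log\e)^{1/5}$, so that $\|u_0^\e\|_{\dot B^{-1}_{\infty,\infty}}\to\infty$. This part is routine, and it is exactly the logarithmic weight that forces the blow‑up.

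The heart of the matter is global existence \emph{despite} this large norm, for which the Koch--Tataru theorem does not apply directly. Here I would exploit the structure emphasised in the abstract: the initial vertical velocity vanishes, $u_0^{\e,3}=0$, and the horizontal field is two‑dimensionally divergence free, $\partial_1u_0^{\e,1}+\partial_2u_0^{\e,2}=0$, since it equals $\nabla_h^\perp\varphi^\e$. As long as the vertical component stays zero the horizontal nonlinearity is a pure horizontal divergence, $u\cdot\nabla u^h=\mathrm{div}_h(u^h\otimes u^h)$, carrying no $\partial_3$ derivative; the vertical direction couples in only through the extra dissipation $-\partial_3^2$, which merely helps, and through $\partial_3p$. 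Two mechanisms then drive the analysis. The rapid oscillation makes everything carrying the frequency $\e^{-1}$ decay like $e^{-t/\e^2}$, confining the dangerous interactions to the thin parabolic layer $t\lesssim\e^2|\log\e|$. Meanwhile the quadratic self‑interaction, through $\cos^2(x_3/\e)=\tfrac12+\tfrac12\cos(2x_3/\e)$, generates a non‑oscillating, genuinely two‑dimensional mean flow that survives the layer and is governed, to leading order, by the \emph{globally wellposed} two‑dimensional Navier--Stokes system in $(x_1,x_2)$ with $x_3$ as a parameter.

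Accordingly I would build an approximate solution $\vapp$ as the sum of the free evolution $e^{t\Delta}u_0^\e$ (which controls the short oscillatory layer) and the global two‑dimensional solution carrying the generated mean flow, its third component set to zero. One then writes $u=\vapp+R$, where $R$ solves a perturbed Navier--Stokes system with source $F=(\partial_t-\Delta)\vapp-Q(\vapp,\vapp)$, and estimates $R$ in a scaling‑invariant, anisotropic norm adapted to the three length scales ($O(1)$ in $x_1$, $\e^\alpha$ in $x_2$, $\e$ in $x_3$). The slow variation $x_2/\e^\alpha$ makes the $x_2$–derivatives of $\vapp$ small, which is precisely what is needed to absorb the amplitude $\e^{-1}$ in the bilinear term; the exponent $\alpha$ and the power $1/5$ are tuned so that this anisotropic gain beats the loss coming from the large amplitude, both in the definition of the mean flow and in the smallness of $F$.

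The main obstacle will be to control, globally in time and in a scaling‑critical space, the two effects that the structure does not make vanish: the vertical component $u^3$ that the nonlinearity regenerates through $\partial_3p$, and the two‑way coupling between the oscillating part and the two‑dimensional mean flow (the mean flow can, in principle, feed energy back into the oscillations). Closing the argument requires the stability of the two‑dimensional flow under the three‑dimensional perturbation together with delicate anisotropic product estimates, and it is exactly at the threshold weight $(-\log\e)^{1/5}$ that the resulting fixed‑point scheme still converges; once $R$ is shown to be globally small, the smallness theorem recalled above upgrades the solution to a global smooth one.
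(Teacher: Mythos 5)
Your lower bound for the $\dot B^{-1}_{\infty,\infty}$ norm is the right computation (evaluate $t^{1/2}\|e^{t\Delta}u_0^\e\|_{L^\infty}$ at $t\sim\e^2$, where the vertical oscillation has lost only a constant factor and the amplitude $(-\log\e)^{1/5}\e^{-1}$ of $\partial_2\varphi^\e$ survives). For the global existence part, however, your strategy is not the one behind this statement. The theorem is quoted from~\cite{cg2}, and the paper says explicitly, immediately after the statement, what that proof does: the solution remains close to the free evolution $e^{t\Delta}u_0^\e$ \emph{itself}, and the argument ``uses neither the energy estimate, nor the fact that the two dimensional, incompressible Navier--Stokes is globally wellposed''. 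The mechanism is the algebraic structure of $u\cdot\nabla u$ for data of the form $(\partial_2\varphi^\e,-\partial_1\varphi^\e,0)$: the transport operator is $\partial_2\varphi^\e\,\partial_1-\partial_1\varphi^\e\,\partial_2$, so the large factor $\partial_2\varphi^\e\sim\e^{-1}$ is always paired with the harmless derivative $\partial_1$, while the $\e^{-\alpha}$ loss of $\partial_2$ is always paired with the small factor $\partial_1\varphi^\e\sim\e^{\alpha-1}$; this makes the relevant iterate small and a perturbation argument closes around the free solution. What you propose instead --- an approximate solution built from a two-dimensional mean flow governed by the globally wellposed 2D system, with $x_3$ as a parameter --- is essentially the strategy of Theorem~\ref{theoquasi2D} of the present paper for the slowly varying data, transplanted to the oscillating example; it is precisely the route the authors emphasize is \emph{not} taken for this statement.

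Independently of that mismatch, your route has concrete gaps as written. First, the non-oscillating part of the self-interaction produced by $\cos^2(x_3/\e)=\tfrac12+\tfrac12\cos(2x_3/\e)$ is \emph{not} confined to the layer $t\lesssim\e^2|\log\e|$: it is damped only at the rate set by the horizontal frequency $\e^{-\alpha}$, and its amplitude is of order $\e^{-2}(-\log\e)^{2/5}$. The ``mean flow'' is therefore a \emph{forced} two-dimensional problem with a very large source, and global wellposedness of 2D Navier--Stokes only yields bounds that grow with the data; it does not by itself give the smallness of this flow, or of the remainder, that your scheme needs. You assert that $\alpha$ and the exponent $1/5$ are ``tuned'' so that everything closes, but the statement allows any $\alpha\in\,]0,1[$, so this bookkeeping is exactly the content of the proof and cannot be left implicit. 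Second, the concluding step --- once $R$ is small, ``the smallness theorem recalled above upgrades the solution to a global smooth one'' --- is not a valid deduction: Theorem~\ref{Kochtatarutheo} concerns small \emph{initial data} for $(GNS)$, not perturbations of a large approximate solution; to close you need a stability estimate around the approximate solution, which in turn requires global space-time integrability bounds on it (the analogue of Lemma~\ref{lemmaquasi2D1}) that are nowhere established for your choice of approximate solution.
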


The proof of that theorem shows that the solution remains close to  the solution of the free equation~$e^{t\Delta}u_{0,\e}$. 
It is  important to notice   that the proof uses in a crucial way the algebraic structure of the non linear term~$u\cdot\nabla u$, 
but uses neither  the energy estimate, nor the fact that the two dimensional,
 incompressible Navier-Stokes is globally wellposed.  Let us give some other results on large initial data giving rise
to global solutions: In~\cite{babinandco},  the initial
data is chosen so as to transform the equation into a rotating fluid equation.  In~\cite{iftimieraugelsell}, 
\cite{raugelsell}  the equations are posed in a thin domain (in all those cases
the global wellposedness  of the two dimensional equation is a crucial ingredient in the proof). 
In~\cite{gimrn} and~\cite{iftimie}, the case
of an initial data close to bidimensional vector field is studied, in the periodic case.  Finally   in~\cite{foiassaut2} 
(Remark~7),    an arbitrary
large initial data is constructed in the periodic case, generating a global solution (which is in fact a solution to the heat equation, as the 
special dependence on the space variables implies that the nonlinear term cancels).

The class of examples we exhibit here is quite different. They are close to a  two   dimensional flow in the sense that they are slowly 
varying in the one direction (the vertical one). More precisely the aim of this paper is the proof of the following theorem.
\begin{theo}
\label{theoquasi2D}
{\sl Let~$v_0^h=(v_0^1,v_0^2,0)$ be a  horizontal, smooth
 divergence free  vector field on~$\R^3$ (i.e.~$v_0^h$ is  in~$L^2(\R^3)$ as well as all its derivatives), 
 belonging, as well as all its derivatives, to~$L^2(\R_{x_3}; \dot H^{-1}(\R^2))$; let $w_0$ be 
 a smooth divergence free vector field on~$\R^3$. Then if $\e$ is small enough, the initial data
$$
u_{0}^{\e} (x) = ( v_0^h+\e w^h,w_0^3 )(x_h,\e x_3)
$$
generates a unique, global   solution~$u^{\e}$ of~$(NS)$.
}
\end{theo}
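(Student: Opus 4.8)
The plan is to exploit the slow variation in the vertical direction by passing to the slow variable $y_3 = \e x_3$, which I treat as a parameter, so that at leading order the dynamics reduces to a two dimensional, globally wellposed problem. First I would look for the solution as a profile $u^\e(t,x) = U^\e(t,x_h,\e x_3)$, so that writing $\na_h = (\p_1,\p_2)$ and $\Delta_h = \p_1^2+\p_2^2$ one has $\p_3 = \e\p_{y_3}$ and $\Delta = \Delta_h + \e^2\p_{y_3}^2$, whence every term reaching into the third direction acquires a factor $\e$. Inserting the ansatz into $(NS)$ and collecting the terms of order zero in $\e$, the horizontal component $v^h(t,x_h,y_3)$ obeys, for each frozen value of the parameter $y_3$, the two dimensional Navier-Stokes system $\p_t v^h + v^h\cdot\na_h v^h - \Delta_h v^h = -\na_h p$ with $\dive_h v^h = 0$ and datum $v_0^h(\cdot,y_3)$, while the vertical component $w^3(t,x_h,y_3)$ solves the \emph{linear} transport-diffusion equation $\p_t w^3 + v^h\cdot\na_h w^3 - \Delta_h w^3 = 0$ driven by $v^h$, with datum $w_0^3(\cdot,y_3)$ (the vertical pressure gradient $-\p_3 p = -\e\p_{y_3}p$ being $O(\e)$). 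Both problems are globally wellposed for smooth data, so the leading profile $(v^h,w^3)$ exists for all time; the reason the nonlinearity does not destroy this reduction is precisely the transport structure $u\cdot\na u$ of $(NS)$, which forces every genuinely three dimensional contribution to carry the small vertical derivative $\p_3 = \e\p_{y_3}$ and hence a power of $\e$.

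Next I would build a divergence free approximate solution. The naive profile $(v^h,w^3)(t,x_h,\e x_3)$ fails to be divergence free by the amount $\e\p_{y_3}w^3$, so I would add an $O(\e)$ horizontal correction $\e\bar v^h$ solving $\dive_h\bar v^h = -\p_{y_3}w^3$, which is exactly the role played by the term $\e w^h$ in the data; inverting the horizontal divergence here, through operators of the form $\na_h\Delta_h^{-1}$, is where the hypothesis $v_0^h\in L^2(\R_{x_3};\psob{-1}(\R^2))$ (together with the same property for all derivatives) enters, since it supplies the negative horizontal regularity needed for those operators to act boundedly into $L^2$ and for the pressure to be controlled. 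Denoting by $u_{app}^\e$ the resulting corrected, divergence free profile and by $F^\e$ its residual, that is, the quantity by which $u_{app}^\e$ fails to solve $(NS)$, the construction should yield $F^\e = O(\e)$ in the relevant scaling adapted norms after pushing the expansion to a sufficiently high order in $\e$.

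Finally I would set $u^\e = u_{app}^\e + R^\e$ and derive the perturbation equation $\p_t R^\e - \Delta R^\e + u^\e\cdot\na R^\e + R^\e\cdot\na u_{app}^\e = -\na q + F^\e$ with $\dive R^\e = 0$. In the $L^2$ energy estimate the transport term $\int(u^\e\cdot\na R^\e)\cdot R^\e$ vanishes by incompressibility, leaving the linear term $\int(R^\e\cdot\na u_{app}^\e)\cdot R^\e$ and the source $\int F^\e\cdot R^\e$ to be controlled; the contributions carrying a vertical derivative of $u_{app}^\e$ are $O(\e)$ and harmless, and $F^\e = O(\e)$, so the genuinely delicate contributions are those involving the horizontal derivatives $\na_h v^h$ of the \emph{large} leading profile, which are of size $O(1)$. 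The heart of the matter, and the step I expect to be the main obstacle, is that this is not a small-data problem — $u_{app}^\e$ is $O(1)$ — so the estimate cannot be closed perturbatively but must be closed \emph{globally} in time using a priori bounds on the two dimensional flow: I would absorb part of the dangerous terms into the dissipation $\|\na R^\e\|_{L^2}^2$ and handle the remainder through a Gronwall factor built from $\int_0^\infty\|\na_h v^h\|_{L^2}^2\,dt$, which is finite by the two dimensional energy equality and is reinforced by the time-decay of $v^h$ furnished a second time by the hypothesis $v_0^h\in L^2(\R_{x_3};\psob{-1}(\R^2))$. With these global bounds in hand, taking $\e$ small enough keeps $R^\e$ bounded, and in fact small, for all time, which both propagates the solution globally and, through the standard $L^2$ argument for such perturbed systems, gives its uniqueness.
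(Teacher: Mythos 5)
Your overall architecture matches the paper's: a two dimensional Navier--Stokes flow $\vp^h(t,\cdot,y_3)$ with $y_3$ as a parameter, a linear transport--diffusion equation for the remaining components, a divergence free approximate solution whose residual is $O(\e^{\gamma})$, and a Gronwall-type energy estimate on the remainder $R^\e=u^\e-u^\e_{app}$. (The paper packages the corrector slightly differently -- it puts all of $w_0$ into one divergence free linear system $(T^\e_{\vp})$ with its own pressure, rather than solving $\dive_h \bar v^h=-\partial_{y_3}w^3$ separately -- but that is cosmetic.) However, two steps in your closing argument would fail as written.

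First, the energy estimate on $R^\e$ cannot be performed in $L^2(\R^3)$. The space $L^2(\R^3)$ is supercritical for three dimensional Navier--Stokes ($\|u_\lam\|_{L^2}=\lam^{-1/2}\|u\|_{L^2}$), so even a bound showing $R^\e$ small in $L^\infty_t L^2_x$ does not control the self-interaction $R^\e\cdot\nabla R^\e$ nor prevent blow-up of $u^\e$; the cancellation $\int (u^\e\cdot\nabla R^\e)\cdot R^\e=0$ gives you a Leray-type bound and nothing more. The paper works instead at the scale-invariant level $\dot H^{1/2}(\R^3)$: there the term $(R^\e\cdot\nabla R^\e|R^\e)_{\dot H^{1/2}}$ does not vanish but is bounded by $C\|R^\e\|_{\dot H^{1/2}}\|\nabla R^\e\|_{\dot H^{1/2}}^2$, absorbed into the dissipation by a bootstrap as long as $\|R^\e\|_{\dot H^{1/2}}$ stays below a fixed threshold, which is guaranteed because the forcing is $O(\e^{1/3})$ in $L^2(\R^+;\dot H^{-1/2})$. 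This change of functional setting is not optional.

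Second, your Gronwall coefficient is measured in the wrong norm. The profile is evaluated at $\e x_3$, so $\|\nabla_h \vp^h(t,\cdot,\e\cdot)\|_{L^2(\R^3)}^2=\e^{-1}\|\nabla_h\vp^h(t)\|_{L^2(\R^3)}^2$, and the quantity $\int_0^\infty\|\nabla_h v^h\|_{L^2}^2\,dt$ you propose to put in the exponential is of size $\e^{-1}$, which destroys the estimate. The paper's resolution is to control the cross terms $(R^\e\cdot\nabla u^\e_{app}|R^\e)_{\dot H^{1/2}}$ and $(u^\e_{app}\cdot\nabla R^\e|R^\e)_{\dot H^{1/2}}$ by the \emph{anisotropic} quantities $\|u^\e_{app}\|_{L^\infty}$ and $\|\nabla u^\e_{app}\|_{L^\infty_v L^2_h}$, which are invariant under the vertical dilation and whose squares are integrable in time (Lemma~\ref{lemmaquasi2D1}); proving that such norms suffice requires a dedicated trilinear estimate obtained from a Gagliardo--Nirenberg inequality in the horizontal variables only (Lemma~\ref{lemmeconclutheo31}), and establishing their time integrability uses the hypothesis $v_0^h\in L^2_v\dot H^{-1}_h$ to get $\vp^h\in L^2(\R^+;L^2)$ -- not, as you suggest, to invert the horizontal divergence. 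Without this anisotropic machinery the closing estimate does not survive the limit $\e\to0$.
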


\noindent{\bf Remarks}
\begin{itemize}
\item A typical example of vector fields~$v_0^h$ satisfying the hypothesis
 is~$v_0^h= (-\partial_2 \f,\partial_1\f, 0)$ where~$\f$ is a function of the Schwarz class~$\cS(\R^3)$.
\item This class of examples of initial data corresponds to a ``well prepared" case.  The ``ill prepared" case would correspond
 to the case 
when the horizontal divergence of the initial data is of size~$\e^\alpha$ with $\alpha$ less than  $1$,  and   the vertical 
component  of 
 the initial data is of size~$\e^{\alpha-1}$. This case  is certainly   very interesting to understand,  but that goes probably far beyond
 the methods
used in this paper.
\item We have to check that the initial data may be large. This is ensured by the following proposition.
\end{itemize}
\begin{prop}
\label{quasi2Dlargecheck}
{\sl Let $(f,g)$ be in~$  \cS(\R^3)$. Let us define~$h^\e (x_h,x_3) \eqdefa f(x_h)g(\e x_3)$. We have, if $\e$ is small enough,
$$
\|h^\e\|_{\dot B^{-1}_{\infty,\infty}(\R^3)} \geq \frac 1 4 \|f\|_{\dot B^{-1}_{\infty,\infty}(\R^2)} 
\|g\|_{L^\infty(\R)}.
$$
}
\end{prop}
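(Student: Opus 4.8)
The plan is to exploit the tensor-product structure of $h^\e$ together with the characterization of the norm recalled in\refeq{defB-1infty}, namely $\|u_0\|_{\dot B^{-1}_{\infty,\infty}} = \sup_{t>0} t^{\frac12}\|e^{t\Delta}u_0\|_{L^\infty}$. Writing $\Delta = \Delta_h + \partial_3^2$ with $\Delta_h = \partial_1^2+\partial_2^2$ the horizontal Laplacian, and using that $h^\e(x_h,x_3)=f(x_h)g(\e x_3)$, the heat flow factorizes:
\[
e^{t\Delta}h^\e(x_h,x_3) = (e^{t\Delta_h}f)(x_h)\,\bigl(e^{t\partial_3^2}[g(\e\,\cdot)]\bigr)(x_3).
\]
The first point I would record is the elementary scaling identity $\bigl(e^{t\partial_3^2}[g(\e\,\cdot)]\bigr)(x_3) = (e^{\e^2 t\,\partial^2}g)(\e x_3)$, which holds because both sides solve the one-dimensional heat equation with the same datum $g(\e\,\cdot)$, the chain rule producing the factor $\e^2$. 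Since the two factors depend on independent variables and are nonnegative in modulus, the $L^\infty$ norm of the product factorizes, so that for every $t>0$
\[
\|e^{t\Delta}h^\e\|_{L^\infty(\R^3)} = \|e^{t\Delta_h}f\|_{L^\infty(\R^2)}\,\|e^{\e^2 t\,\partial^2}g\|_{L^\infty(\R)}.
\]

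Next I would convert this identity into the desired lower bound by selecting a single well-chosen time. We may assume $f$ and $g$ are not identically zero, since otherwise the claim is trivial; then $\|f\|_{\dot B^{-1}_{\infty,\infty}(\R^2)}$ is a finite positive number and, by its very definition, one may fix a time $t_0\in(0,\infty)$ with
\[
t_0^{\frac12}\|e^{t_0\Delta_h}f\|_{L^\infty(\R^2)} \geq \tfrac34\,\|f\|_{\dot B^{-1}_{\infty,\infty}(\R^2)}.
\]
Keeping $t_0$ frozen and restricting the supremum defining the three-dimensional norm to $t=t_0$, the factorization above yields
\[
\|h^\e\|_{\dot B^{-1}_{\infty,\infty}(\R^3)} \geq t_0^{\frac12}\|e^{t_0\Delta_h}f\|_{L^\infty(\R^2)}\,\|e^{\e^2 t_0\,\partial^2}g\|_{L^\infty(\R)}.
\]

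It then remains to let $\e\to0$ in the vertical factor, with $t_0$ already fixed. Choosing a point $x^\star$ at which $|g|$ attains $\|g\|_{L^\infty(\R)}$, the heat kernel being an approximate identity gives $(e^{\e^2 t_0\,\partial^2}g)(x^\star)\to g(x^\star)$ as $\e\to0$, whence $\|e^{\e^2 t_0\,\partial^2}g\|_{L^\infty(\R)}\geq\tfrac34\,\|g\|_{L^\infty(\R)}$ for $\e$ small enough. Combining the two factors produces the bound with constant $\tfrac{9}{16}>\tfrac14$, which is comfortably enough. The only step requiring genuine care is the order of the two limiting operations: one must first fix the near-optimal time $t_0$ attached to $f$ and only afterwards send $\e\to0$, the admissible threshold on $\e$ being allowed to depend on $f$ and $g$ through $t_0$. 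Attempting to pass to the supremum in $t$ and the limit in $\e$ simultaneously would be the main pitfall, but it is avoided entirely by this freezing.
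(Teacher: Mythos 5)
Your proof is correct and follows essentially the same route as the paper's: factorize the heat flow as $(e^{t\Delta}h^\e)(x)=(e^{t\Delta_h}f)(x_h)\,(e^{\e^2t\partial_3^2}g)(\e x_3)$, freeze a near-optimal time $t_0$ for $f$, and then let $\e\to0$ in the vertical factor. The only difference is cosmetic (you use the constant $3/4$ in both factors where the paper uses $1/2$ for $f$ and lets the vertical factor tend to $1$), so nothing further is needed.
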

\begin{proof}
By the definition of~$\|\cdot\|_{\dot B^{-1}_{\infty,\infty}(\R^3)}$ given by~(\ref{defB-1infty}), 
 we have to bound from below the quantity~$\|e^{t\Delta}h^\e
\|_{L^\infty(\R^3)}$.  Let us write that
$$
(e^{t\Delta} h^\e) (t,x)  =  (e^{t\Delta_h} f)(t,x_h)  (e^{t\partial_3^2} g)(\e^2t,\e x_3).
$$
Let us consider a positive time~$t_0$ such that
$$
t_0^{\frac 1 2} \|e^{t_0\Delta_h} f\|_{L^\infty(\R^2)} \geq  \frac 1 2\|f\|_{\dot B^{-1}_{\infty,\infty}(\R^2)} .
$$
Then we have
\beno
t_0^{\frac 1 2} \|e^{t_0\Delta } h^\e\|_{L^\infty(\R^3)} & = & t_0^{\frac 1 2} \|e^{t_0\Delta_h} f
\|_{L^\infty(\R^2)} \|(e^{t_0\partial_3^2} g)(\e^2t_0,\e \cdot)\|_{L^\infty(\R)}\\
 & \geq & \frac 1 2\|f\|_{\dot B^{-1}_{\infty,\infty}(\R^2)} \|e^{\e^2t_0\partial_3^2} g\|_{L^\infty(\R)}.
\eeno
As~$\ds \lim_{\e\rightarrow0} e^{\e^2t_0\partial_3^2} g=g$ in~$L^\infty(\R)$, the proposition is proved.
\end{proof}

{\bf Structure of the paper:} The proof of   Theorem~\ref{theoquasi2D} is achieved in the next section, assuming two crucial 
lemmas. The proof of those lemmas is postponed to Sections~\ref{estimatesvapp} and~\ref{estimateserror} respectively.

{\bf Notation:}    If~$A$ and~$B$ are two real numbers, we shall write~$A  \lesssim B$ if there is
a universal constant~$C$, which does not depend on varying parameters of the problem, such that~$A \leq CB$. If~$A  \lesssim B$
and~$B  \lesssim A$, then we shall write~$A \sim B$.

If~$v_{0}$ is a vector field, then we shall denote by~$C_{v_{0}}$ a constant
depending only on  norms of~$v_{0}$. Similarly we shall use the notation~$C_{v_{0},w_{0}}$ if the constant depends
on norms of two vector fields~$v_{0}$ and~$w_{0}$, etc.

A function space with a subscript ``$h$''  (for ``horizontal'') will denote a space defined on~$\R^{2}$, while the 
subscript~``$v$''  (for ``vertical'') will denote a space defined on~$\R$. For
 instance~$L^{p}_{h} \eqdefa  L^{p}(\R^{2})$, $L^{q}_{v} \eqdefa L^{q}(\R)$, and similarly
for Sobolev spaces or for mixed spaces such as~$L^{p}_{v} L^{q}_{h} $ or~$L^p_v\dot H^\s_h$.

%%%%%%%%%%%%%%%%%%%%%%%%%%%%%%%%%%%%
%%%%%%%%%%%%%%%%%%%%%%%%%%%%%%%%%%%%

\section{Proof of the theorem}\setcounter{equation}{0}
\label{sructureproofquasi2D}

The proof of Theorem~\ref{theoquasi2D} consists  in constructing an approximate solution to~$(NS)$ as a perturbation to 
the 2D Navier-Stokes system. Following the idea that we are close to  the two dimensional, periodic
 incompressible Navier-Stokes system, let 
us define~$\vp^h$ as the solution of the following system, where~$y_{3} \in \R$ is a parameter:
$$
(NS2D_{3}) \left\{
\begin{array}{c}
\partial_t \vp^h + \vp^h \cdot \nabla_h \vp^h -\Delta_h  \vp^h = -\nabla_h \presspO \quad \mbox{in} \: 
\R^+ \times \R^2\\
\dive_h \vp^h = 0\\
\vp^h_{|t=0} = v_0^h(\cdot ,y_3).
\end{array}
\right.
$$ 
This system is globally wellposed for any~$y_{3} \in \R$,  and the solution is smooth in (two dimensional) space, and in time. 
Let us consider the solution~$\wp^\e$ 
of the linear equation
$$
(T^\e_{\vp}) \left\{
\begin{array}{c}
\partial_t \wp^\e + \vp^h \cdot \nabla_h \wp^\e -\Delta_h  \wp^\e -\e^2\partial_3^2 \wp^\e =
-( \nabla^h \presspun,\e^2 \partial_{3} \presspun)\quad \mbox{in} \: 
\R^+ \times \R^3\\
\dive \wp^\e = 0\\
\wp^\e_{|t=0} = w_0,
\end{array}
\right.
$$ 
and let us define the approximate solution
\ben
\label{definvapp}
\vapp (t,x) & = &  (( \vp^h, 0 )+\e (\wp^{\e,h},\e^{-1}\wp^{\e,3}
 ))(t,x_h,\e x_3)\and\\
 \nonumber
  p^\e_{app}(t,x) & =  & (\presspO + \e   \presspun) (t,x_h,\e x_3).  
  \een
Finally let us consider the unique smooth solution~$u^\e$ of~$(NS)$ associated with the initial
data~$u_{0}^{\e}$ on its maximal time  interval of existence~$[0,T_\e)$. The 
proof of Theorem~\ref{theoquasi2D} consists in proving   global in time estimates 
on~$\vapp $, in order to prove  that~$R^{\e} \eqdefa u^{\e}
- \vapp$ remains small, globally in time; this   ensures the global regularity for~$(NS)$.

More precisely, the proof of Theorem~\ref{theoquasi2D}  relies on the following two lemmas, whose proofs are postponed
to Sections~\ref{estimatesvapp} and~\ref{estimateserror} respectively. 
\begin{lemma}
\label{lemmaquasi2D1}
{\sl The vector field~$\vapp$ defined in~(\ref{definvapp}) satisfies the following estimate:
$$
\|\vapp\|_{L^2(\R^+;L^\infty(\R^3))} + \|\nabla \vapp\|_{L^2(\R^+;L^\infty_vL^2_h)}\leq
 C_{v_0,w_0}.
$$
}
\end{lemma}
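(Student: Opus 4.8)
The plan is to estimate separately the two ingredients out of which $\vapp$ is built, namely the $y_3$–family of two dimensional solutions $\vp^h$ and the solution $\wp^\e$ of the linear system $(T^\e_\vp)$, and then to recombine them by the triangle inequality. The structural point that makes this legitimate uniformly in $\e$ is that the vertical rescaling $x_3\mapsto\e x_3$ leaves every $L^\infty_v$ norm unchanged and turns a vertical derivative into a factor $\e$ times a derivative in the slow variable $y_3=\e x_3$. Consequently $\|\vapp(t)\|_{L^\infty(\R^3)}\lesssim\|\vp^h(t)\|_{L^\infty(\R^3)}+\|\wp^{\e,3}(t)\|_{L^\infty(\R^3)}+\e\|\wp^{\e,h}(t)\|_{L^\infty(\R^3)}$, all norms being taken in the slow variables, and likewise for $\nabla\vapp$ the genuinely vertical derivatives carry a prefactor $\e$; it therefore suffices to bound $\vp^h$ and $\wp^\e$ in the variables $(x_h,y_3)$, uniformly in $\e\in\,]0,1[$.

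For the main part $\vp^h$ I would first record the two conservation laws of the two dimensional flow. For each fixed $y_3$ the energy identity and the identity for the vorticity $\omega=\p_1\vp^2-\p_2\vp^1$ give, after integration in $y_3$, the global bounds $\int_0^\infty\|\vp^h(t)\|_{\dot H^1_h}^2\,dt+\int_0^\infty\|\vp^h(t)\|_{\dot H^2_h}^2\,dt\lesssim C_{v_0}$, together with the fact that $t\mapsto\|\vp^h(t)\|_{L^2_h}$ and $t\mapsto\|\vp^h(t)\|_{\dot H^1_h}$ are non–increasing. Here is where the hypothesis $v_0^h\in L^2(\R_{y_3};\dot H^{-1}(\R^2))$ enters: testing $(NS2D_3)$ against $(-\Delta_h)^{-1}\vp^h$ and using the two dimensional Ladyzhenskaya inequality $\|\vp^h\|_{L^4_h}^2\lesssim\|\vp^h\|_{L^2_h}\|\nabla_h\vp^h\|_{L^2_h}$ to absorb the nonlinear term, I would obtain a differential inequality of Gronwall type yielding a global bound on $\|\vp^h(t)\|_{\dot H^{-1}_h}$ and, more importantly, $\int_0^\infty\|\vp^h(t)\|_{L^2_h}^2\,dt\lesssim C_{v_0}$. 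Combined with monotonicity this forces the decay $\|\vp^h(t)\|_{L^2_h}\lesssim(1+t)^{-1/2}$, which a standard parabolic bootstrap upgrades into $\|\vp^h(t)\|_{\dot H^2_h}\lesssim(1+t)^{-3/2}$; all of these bounds are uniform in $y_3$ because the one dimensional Sobolev embedding $\sup_{y_3}|f|\lesssim\|f\|_{L^2_v}^{1/2}\|\p_{y_3}f\|_{L^2_v}^{1/2}$, applied to the initial norms, is finite thanks to the assumption that $v_0^h$ and all its derivatives lie in $L^2(\R^3)\cap L^2(\R_{y_3};\dot H^{-1}(\R^2))$.

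The first norm of the Lemma then follows from the two dimensional interpolation inequality $\|f\|_{L^\infty(\R^2)}\lesssim\|f\|_{\dot H^{-1}_h}^{1/3}\|f\|_{\dot H^2_h}^{2/3}$ (obtained by splitting $\widehat f$ at an optimised frequency, the low part being controlled by $\dot H^{-1}$ and the high part by $\dot H^2$): together with the bounds of the previous paragraph it gives $\|\vp^h(t)\|_{L^\infty}\lesssim(1+t)^{-1}$, whence $\vp^h\in L^2(\R^+;L^\infty(\R^3))$. For the second norm the point is to commute $\sup_{y_3}$ with the time integral; I would apply the same one dimensional Sobolev embedding in $y_3$ to the scalar function $y_3\mapsto\|\nabla_h\vp^h(t,\cdot,y_3)\|_{L^2_h}^2$, reducing the desired bound to $\iint\|\nabla_h\vp^h\|_{L^2_h}^2$ and $\iint\|\nabla_h\p_{y_3}\vp^h\|_{L^2_h}^2$; the first is the $y_3$–integrated enstrophy bound, and the second is obtained by an energy estimate on $\p_{y_3}\vp^h$, which solves the two dimensional equation linearised around $\vp^h$ with datum $\p_{y_3}v_0^h$. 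The contribution of $\wp^\e$ to both norms is handled in exactly the same spirit: $(T^\e_\vp)$ is linear, its energy identity (the transport and pressure terms dropping out by $\dive_h\vp^h=0$ and $\dive\wp^\e=0$) gives $\int_0^\infty\|\nabla_h\wp^\e\|_{L^2}^2\,dt\lesssim\|w_0\|_{L^2}^2$ uniformly in $\e$, and the higher order and $\p_{y_3}$–estimates run as before, the prefactors $\e$ rendering the horizontal corrections lower order.

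The main obstacle is the global–in–time integrability required by the first norm: on the whole space the $L^\infty$ norm of a two dimensional Navier–Stokes solution is not square integrable in time from the energy and enstrophy bounds alone, since these control only high frequencies while the $\dot H^{-1}$ norm, which governs the low frequencies, does not tend to zero. It is precisely the $\dot H^{-1}$ hypothesis—through the decay $\|\vp^h(t)\|_{L^2_h}\lesssim(1+t)^{-1/2}$ and its bootstrapped consequences—that produces the missing time decay, and the secondary technical difficulty is to make every such estimate uniform in the parameter $y_3$, which forces one to differentiate the family $(NS2D_3)$ in $y_3$ and to propagate energy estimates on the linearised system.
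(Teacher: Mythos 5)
Your overall strategy coincides with the paper's: treat $\vp^h$ as a $y_3$-parametrised family of two dimensional solutions, use the $L^2_v\dot H^{-1}_h$ hypothesis to make $\|\vp^h(t,\cdot,y_3)\|_{L^2_h}$ square integrable in time (without it only the $\dot H^s_h$ norms with $s>0$ are under control), and convert $L^2_v$ bounds into $L^\infty_v$ bounds through the one dimensional inequality $\|f\|_{L^\infty_v}^2\lesssim\|f\|_{L^2_v}\|\partial_3 f\|_{L^2_v}$, which indeed forces you to propagate estimates on all the $y_3$-derivatives of $\vp^h$ and $\wpe$. Your way of exploiting the $\dot H^{-1}_h$ hypothesis (an energy estimate in $\dot H^{-1}_h$ with the nonlinearity absorbed via Ladyzhenskaya) is a legitimate variant of the paper's Duhamel argument, and both yield $\int_0^\infty\|\vp^h(t,\cdot,y_3)\|_{L^2_h}^2\,dt\leq C_{v_0}(y_3)$. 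On the other hand your detour through pointwise decay rates is both under-justified and unnecessary: the bound $\|\vp^h(t)\|_{\dot H^2_h}\lesssim(1+t)^{-3/2}$ is not a routine ``parabolic bootstrap'' (the relevant quantity $\|\nabla_h\omega\|_{L^2_h}$ is not monotone, and a genuine decay argument, uniform in $y_3$, would be needed), and it is not needed, since
$\int_0^\infty\|\vp^h\|_{L^\infty_h}^2\,dt\lesssim\|\vp^h\|_{L^2_t\dot H^{1/2}_h}\|\vp^h\|_{L^2_t\dot H^{3/2}_h}$
by two dimensional Gagliardo--Nirenberg and Cauchy--Schwarz in time, both factors being already controlled by the energy-type estimates you describe.

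The genuine gap is in your treatment of $\wpe$. You assert that in the energy identity for $(T^\e_{\vp})$ ``the transport and pressure terms drop out''. The transport term does; the pressure term does not, because the pressure gradient in that system is the anisotropic vector $(\nabla^h\presspun,\e^2\partial_3\presspun)$ while $\wpe$ satisfies the full three dimensional divergence-free condition, so that $\dive_h\wp^{\e,h}=-\partial_3\wp^{\e,3}\neq0$. Integration by parts leaves the residual term $(1-\e^2)\int\presspun\,\partial_3\wp^{\e,3}\,dx$, and estimating it is one of the main technical points of the paper's proof: one inverts the degenerate elliptic operator $\e^2\partial_3^2+\Delta_h$ to obtain $\|\nabla_h\presspun\|_{L^2_v\dot H^\s_h}\lesssim\|\vp^h\cdot\nabla_h\wp^{\e,h}+\partial_3(\wp^{\e,3}\vp^h)\|_{L^2_v\dot H^\s_h}$ uniformly in $\e$, and then closes the estimate by anisotropic product laws and interpolation. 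This cannot be sidestepped, because $\wp^{\e,3}$ enters $\vapp$ at order one (not order $\e$), so you need $\wp^{\e,3}\in L^2(\R^+;L^\infty(\R^3))$ uniformly in $\e$, hence higher-order and $\partial_3$-differentiated estimates on $\wpe$ in $\dot H^s_h$ for $s\neq 0$ as well, in all of which the pressure reappears and must be bounded.
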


\begin{lemma}
\label{lemmaquasi2D2}
{\sl The vector field~$R^{\e} \eqdefa u^{\e}
- \vapp$ satisfies the equation
$$
(E^\e) \left\{
\begin{array}{c}
\partial_t R^\e + R^\e \cdot \nabla R^\e -\Delta R^\e+\vapp \cdot \nabla R^\e+R^\e \cdot \nabla \vapp =
 F^\e-\nabla q^\e\\
\dive R^\e = 0\\
R^\e_{|t=0} = 0
\end{array}
\right.
$$ 
with $\|F^\e\|_{L^2(\R^+;\dot H^{-\frac 1 2}(\R^3))} \leq C_{v_0,w_0} \e^{\frac 1 3}$.
}
\end{lemma}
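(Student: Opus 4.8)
The plan is to obtain $(E^\e)$ by substitution and to read off $F^\e$ as minus the residual of $\vapp$ in $(NS)$. Inserting $u^\e=R^\e+\vapp$ into $(NS)$ and using $\dive\vapp=\dive R^\e=0$, the linear part reproduces $\partial_tR^\e-\Delta R^\e$, while
$$u^\e\cdot\nabla u^\e-\vapp\cdot\nabla\vapp=R^\e\cdot\nabla R^\e+\vapp\cdot\nabla R^\e+R^\e\cdot\nabla\vapp$$
is exactly the remaining left-hand side of $(E^\e)$. Absorbing $\nabla(p^\e-p^\e_{app})$ into $\nabla q^\e$, this forces
$$F^\e\eqdefa-\bigl(\partial_t\vapp+\vapp\cdot\nabla\vapp-\Delta\vapp+\nabla p^\e_{app}\bigr),$$
so everything reduces to estimating this residual.

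To compute it I would pass to the slow variable $y=(x_h,\e x_3)$, under which $\partial_{x_3}=\e\partial_{y_3}$, $\Delta_x=\Delta_h+\e^2\partial_3^2$ and $\nabla_x=(\nabla_h,\e\partial_3)$, and expand in powers of $\e$. The $\e^0$ part of the horizontal component is $\partial_t\vp^h+\vp^h\cdot\nabla_h\vp^h-\Delta_h\vp^h+\nabla_h\presspO$, which vanishes by $(NS2D_3)$; the horizontal and vertical parts of $(T^\e_\vp)$ then cancel the next order, and the would-be $\e^3$ terms cancel against each other. What remains is a short list of explicit terms each carrying at least one factor~$\e$, evaluated at $y$: horizontally $\e(\wp^{\e,h}\cdot\nabla_h\vp^h+\wp^{\e,3}\partial_3\vp^h)$ plus $\e^2$ terms such as $\e^2\partial_3^2\vp^h$ and quadratic terms in $\wp^\e$; vertically $\e(\wp^{\e,h}\cdot\nabla_h\wp^{\e,3}+\wp^{\e,3}\partial_3\wp^{\e,3})$ and, crucially, $\e\,\partial_3\presspO$.

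The estimate rests on the anisotropic scaling identity
$$\|g(\cdot_h,\e\cdot_3)\|_{\dot H^{-\frac12}(\R^3)}^2=\e^{-1}\int_{\R^3}\bigl(|\eta_h|^2+\e^2\eta_3^2\bigr)^{-\frac12}\,|\widehat g(\eta)|^2\,d\eta,$$
which I would control by splitting the horizontal frequency at $|\eta_h|=\rho$: on $|\eta_h|>\rho$ the weight is $\le\rho^{-1}$ and one uses the $L^2$ norm, while on $|\eta_h|\le\rho$ one uses the behaviour of $\widehat g$ near $\eta_h=0$. For the genuinely quadratic terms the two factors are Schwartz in $x_h$, so $\widehat g$ is bounded for small $\eta_h$; bounding the weight by $|\eta_h|^{-1}$ there and optimizing $\rho$ gives these terms a bound of order $\e^{\frac12}$ at each fixed time (the purely $\e^2$ terms being smaller still), which is harmless.

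The main obstacle is the pressure term $\e\,\partial_3\presspO$, for which no $\dot H^{-1}_h$ control holds: since $\int_{\R^2}\partial_3(\vp^j\vp^k)\,dx_h$ does not vanish in general and the Riesz transforms are unbounded on $L^1_h$, neither the $\dot H^{-1}_h$ nor the naive $L^1_h$ route is available. I would instead exploit the divergence form $\presspO=(-\Delta_h)^{-1}\dive_h(\vp^h\cdot\nabla_h\vp^h)$ together with $\vp^h\cdot\nabla_h\vp^j=\dive_h(\vp^j\vp^h)$, which supplies enough cancellation at $\eta_h=0$ to give $|\widehat{\partial_3\presspO}(\eta)|\lesssim|\eta_3|\,A(\eta_3)$ uniformly in $\eta_h$, with $A$ controlled by an $L^1_h$ norm of $\vp\otimes\vp$ that decays in $\eta_3$ by smoothness in $x_3$. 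In the region $|\eta_h|\le\rho$ I then bound the weight by $(\e|\eta_3|)^{-1}$; the factor $|\eta_3|^2$ absorbs the singularity, the horizontal integral contributes the two-dimensional area $\int_{|\eta_h|\le\rho}d\eta_h\sim\rho^2$, and balancing $\e\rho^{-1}$ against $\rho^2$ forces $\rho\sim\e^{1/3}$, whence $\|\e\,\partial_3\presspO(\cdot_h,\e\cdot_3)\|_{\dot H^{-1/2}}^2\lesssim\e^{2/3}$. This order-$\e^{1/3}$ bound dominates all other contributions. Finally I would close the time integration by H\"older's inequality in $t$: the square-integrability in time of the relevant norms of $\vp^h$ and $\presspO$ follows from the energy and maximal-regularity estimates for $(NS2D_3)$, the decay being exactly what the hypothesis $v_0^h\in L^2(\R_{x_3};\dot H^{-1}(\R^2))$ guarantees, and this yields $\|F^\e\|_{L^2(\R^+;\dot H^{-1/2}(\R^3))}\le C_{v_0,w_0}\,\e^{1/3}$.
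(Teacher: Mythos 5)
Your derivation of $(E^\e)$ and your identification of the residual coincide exactly with the paper's: the $\e^0$ horizontal part cancels by $(NS2D_3)$, the next order by $(T^\e_\vp)$, and what survives is $F^\e(t,x)=\e\,G^\e(t,x_h,\e x_3)$ with $G^\e$ made of the quadratic terms in $\wp^\e$, the term $\wp^{\e}\cdot\nabla\vp^h$, the linear term $\e\,\partial_3^2\vp^h$, and the vertical pressure term $\partial_3\presspO$, the last being the one that limits the rate to $\e^{1/3}$. Where you genuinely diverge is in the estimation. The paper performs no horizontal frequency splitting: for the pressure term and all the quadratic terms it uses the embedding $L^{3/2}(\R^3)\hookrightarrow\dot H^{-\frac12}(\R^3)$ together with the exact scaling $\|g(\cdot_h,\e\,\cdot_3)\|_{L^{3/2}}=\e^{-2/3}\|g\|_{L^{3/2}}$; each product is then bounded in $L^{3/2}$ by H\"older and $H^{1/2}(\R^3)\hookrightarrow L^3(\R^3)$, and the factor $\e\cdot\e^{-2/3}=\e^{1/3}$ comes out uniformly (only $\e^2\partial_3^2\vp^h$ is treated by an anisotropic Fourier computation, giving $\e^{1/2}$). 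Your route --- the identity $\|g(\cdot_h,\e\,\cdot_3)\|_{\dot H^{-1/2}}^2=\e^{-1}\int(|\eta_h|^2+\e^2\eta_3^2)^{-1/2}|\widehat g(\eta)|^2\,d\eta$ plus a splitting at $|\eta_h|=\rho$ optimized to $\rho\sim\e^{1/3}$ --- also yields $\e^{1/3}$ for the pressure term (and even $\e^{1/2}$ for the quadratic ones), but it costs verifications the paper avoids: one must check that $\int|\eta_3|\,A(t,\eta_3)^2\,d\eta_3$ is finite and integrable in time, which does hold via Corollary~\ref{corestimateprofil1} --- and this is indeed where the hypothesis $v_0^h\in L^2_v\dot H^{-1}_h$ enters, as you correctly note. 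Two small remarks: the divergence-form cancellation you call ``crucial'' for $\presspO$ is not actually needed, since the symbol $\eta_j\eta_k/|\eta_h|^2$ of $(-\Delta_h)^{-1}\partial_j\partial_k$ is bounded pointwise and hence $|\widehat{\presspO}|\lesssim\sum|\widehat{\vp^j\vp^k}|$ directly; and the $L^{3/2}$ device is worth knowing, as it disposes of every nonlinear term in three lines without any analysis near $\eta_h=0$.
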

 Let us postpone the proof of those lemmas    and conclude the proof of Theorem\refer{theoquasi2D}.
We denote, for any positive~$\lam$,
$$
V_\e(t)\eqdefa \|\vapp(t,\cdot)\|_{L^\infty(\R^3)}^{2}
 + \|\nabla \vapp(t,\cdot)\|_{L^\infty_vL^2_h}^{2}\and 
R^\e_\lam (t)\eqdefa \exp \left(- \lam\int_0^t V_\e(t')dt'\right) R^{\e}(t).
$$
Lemma\refer{definvapp} implies
 that~$\ds I_0\eqdefa \int_0^\infty V_\e(t)dt$ is finite. By an~$\dot H^{\frac 1 2}$ energy estimate in~$\R^3$,
 we get
$$
\longformule{
\frac 1 2\frac d {dt} \|R^\e_\lam(t)\|^2_{\dot H^{\frac 1 2}} 
+ \|\nabla R^\e_\lam(t)\|^2_{\dot H^{\frac 1 2}} 
\leq -2\lam V_\e(t)\|R^\e_\lam(t)\|^2_{\dot H^{\frac 1 2}}
+ e^{\lam I_0} \left| (R^\e_\lam(t) \cdot \nabla
 R^\e_\lam(t)|R_\lam^\e(t))_{\dot H^{\frac 1 2}} \right|
}
{
{}+\left| (R^\e_\lam(t) \cdot \nabla \vapp(t)|R_\lam^\e(t))_{\dot H^{\frac 1 2}}\right|
+\left| (\vapp(t)\cdot R^\e_\lam(t)|R_\lam^\e(t))_{\dot H^{\frac 1 2}} \right|
+\left| (F^\e(t)|R_\lam^\e(t))_{\dot H^{\frac 1 2}} \right| .
}
$$
The estimate (i) of Lemma 1.1 of\ccite{chemin10} claims  in particular that, for any~$s\in ]-d/2,d/2[$,
 for any divergence free vector field~$a$ in~$d$ space dimensions and any function $b$, we have
\beq
\label{estimesiam}
(a\cdot\nabla b|b)_{\dot H^s } \leq C \|\nabla a\|_{\dot H^{\frac d 2-1} } \| b\|_{\dot H^s } 
\|\nabla b\|_{\dot H^s}.
\eeq
Applying with~$d=3$ and~$s=1/2$, this implies that
\beq 
\label{estimconclutheo31}
 \left| (R^\e_\lam(t) \cdot \nabla R^\e_\lam(t)|R_\lam^\e(t))_{\dot H^{\frac 1 2}} \right|
\lesssim \|R_\lam^\e(t)\|_{\dot H^{\frac 1 2}}\|\nabla  R_\lam^\e(t)\|^2_{\dot H^{\frac 1 2}}.
\eeq
In order to estimate the other non linear terms, let us establish the following lemma.
\begin{lemma}
\label{lemmeconclutheo31}
{\sl Let~$a$ and~$b$ be two vector fields. We have
$$
\left|(a\cdot\nabla b|b)_{\dot H^{\frac 1 2}} \right|+
\left|(b\cdot\nabla a|b)_{\dot H^{\frac 1 2}} \right| 
\lesssim \left(\|a\|_{L^\infty} +\|\nabla a\|_{L^\infty_v(L^2_h)}\right)
\|b\|_{\dot H^{\frac 1 2}}\|\nabla b\|_{\dot H^{\frac 1 2}}.
$$
}
\end{lemma}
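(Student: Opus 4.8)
The plan is to treat each of the two terms by duality between $\dot H^{-1/2}(\R^3)$ and $\dot H^{1/2}(\R^3)$. Writing $(F\,|\,b)_{\dot H^{1/2}}$ as $\int|\xi|\,\widehat F\,\overline{\widehat b}\,d\xi$ and splitting $|\xi|=|\xi|^{-\frac12}\cdot|\xi|^{\frac32}$ gives $|(F\,|\,b)_{\dot H^{1/2}}|\le\|F\|_{\dot H^{-1/2}}\|b\|_{\dot H^{3/2}}$; since $\|b\|_{\dot H^{3/2}}=\|\nabla b\|_{\dot H^{1/2}}$, it suffices to bound both $\|a\cdot\nabla b\|_{\dot H^{-1/2}}$ and $\|b\cdot\nabla a\|_{\dot H^{-1/2}}$ by $(\|a\|_{L^\infty}+\|\nabla a\|_{L^\infty_v(L^2_h)})\|b\|_{\dot H^{1/2}}$. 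The structural input I would isolate first is the anisotropic embedding $\dot H^{1/2}(\R^3)\hookrightarrow L^2_v(L^4_h)$: indeed $\|f\|_{L^2_v\dot H^{1/2}_h}^2=\int|\xi_h|\,|\widehat f|^2\,d\xi\le\|f\|_{\dot H^{1/2}}^2$ because $|\xi_h|\le|\xi|$, and the two--dimensional Sobolev embedding $\dot H^{1/2}(\R^2)\hookrightarrow L^4(\R^2)$ applied on each horizontal slice upgrades this to $L^2_v(L^4_h)$.

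The term $b\cdot\nabla a$ is the favorable one, because the derivative already sits on $a$, which is exactly what $\|\nabla a\|_{L^\infty_v(L^2_h)}$ controls. I would pair it with a test field $\phi$ normalized by $\|\phi\|_{\dot H^{1/2}}\le1$, slice in the vertical variable, and apply H\"older on each horizontal plane with exponents $(2,4,4)$, putting $\partial_j a$ in $L^2_h$ and $b,\phi$ in $L^4_h$. A Cauchy--Schwarz in $x_3$ then yields $|\langle b\cdot\nabla a,\phi\rangle|\lesssim\|\nabla a\|_{L^\infty_v(L^2_h)}\|b\|_{L^2_v(L^4_h)}\|\phi\|_{L^2_v(L^4_h)}$, and the embedding above closes this term with the constant $\|\nabla a\|_{L^\infty_v(L^2_h)}$.

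The term $a\cdot\nabla b$ is the delicate one, since the derivative falls on $b$ and must be moved first. Regrouping, $\langle a^j\partial_j b,\phi\rangle=\langle\partial_j b,\,a^j\phi\rangle\le\|b\|_{\dot H^{1/2}}\,\|a^j\phi\|_{\dot H^{1/2}}$ (using $\|\partial_j b\|_{\dot H^{-1/2}}\le\|b\|_{\dot H^{1/2}}$), which reduces everything to the anisotropic multiplier estimate $\|a\phi\|_{\dot H^{1/2}}\lesssim(\|a\|_{L^\infty}+\|\nabla a\|_{L^\infty_v(L^2_h)})\|\phi\|_{\dot H^{1/2}}$. I would establish this through Bony's decomposition $a\phi=T_a\phi+T_\phi a+R(a,\phi)$: the low--frequency--$a$ paraproduct satisfies $\|T_a\phi\|_{\dot H^{1/2}}\lesssim\|a\|_{L^\infty}\|\phi\|_{\dot H^{1/2}}$ and produces the $\|a\|_{L^\infty}$ contribution, while the remaining pieces $T_\phi a+R(a,\phi)$, in which $a$ carries the high frequencies, are where the regularity of $a$ must be spent.

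The main obstacle is precisely this last estimate: bounding $T_\phi a+R(a,\phi)$ in $\dot H^{1/2}(\R^3)$ by $\|\nabla a\|_{L^\infty_v(L^2_h)}\|\phi\|_{\dot H^{1/2}}$. The difficulty is that the regularity of $a$ is measured in the anisotropic space $L^\infty_v(L^2_h)$ --- one full gradient, but with different integrability in the vertical and horizontal variables --- whereas the target $\dot H^{1/2}(\R^3)$ is isotropic, so the standard isotropic paraproduct bounds do not see this splitting. I would therefore run this step with an anisotropic Littlewood--Paley decomposition, separating horizontal and vertical frequency truncations, and use anisotropic Bernstein inequalities together with the embedding $\dot H^{1/2}(\R^3)\hookrightarrow L^2_v(L^4_h)$ to match $\nabla a\in L^\infty_v(L^2_h)$ against the $L^4_h$ integrability of $\phi$ on each slice, summing the frequency blocks at the end. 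A scaling check ($\|\nabla a\|_{L^\infty_v(L^2_h)}$ and $\|a\|_{L^\infty}$ are both invariant under $x\mapsto\lambda x$, while both sides of the multiplier estimate scale like $\lambda^{-1}$) confirms that the anisotropic norm is the correct gauge for the regularity of $a$, which is what makes this splitting possible.
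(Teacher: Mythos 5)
Your treatment of the term $(b\cdot\nabla a|b)_{\dot H^{\frac12}}$ is correct and is essentially the paper's argument: slice in $x_3$, put $\nabla a$ in $L^2_h$ and the two remaining factors in $L^4_h$ (the paper uses the equivalent horizontal Gagliardo--Nirenberg bound $|b(x_h,x_3)|^2\lesssim\|b(\cdot,x_3)\|_{\dot H^{1/2}_h}\|\nabla_h b(\cdot,x_3)\|_{\dot H^{1/2}_h}$), apply Cauchy--Schwarz in the vertical variable, and conclude with the embedding $\dot H^{\frac12}(\R^3)\hookrightarrow L^2_v\dot H^{\frac12}_h$. That half of the lemma is fine.

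The term $(a\cdot\nabla b|b)_{\dot H^{\frac12}}$ is where your proposal has a genuine gap, and the gap is created by your choice of duality. By splitting $|\xi|=|\xi|^{-\frac12}\cdot|\xi|^{\frac32}$ and then moving the derivative onto the test function, you reduce the whole term to the multiplier estimate $\|a\phi\|_{\dot H^{1/2}}\lesssim(\|a\|_{L^\infty}+\|\nabla a\|_{L^\infty_v(L^2_h)})\|\phi\|_{\dot H^{1/2}}$, which you correctly identify as the main obstacle but do not prove. This is a delicate endpoint statement: $L^\infty$ alone is not a multiplier space for $\dot H^{1/2}$, and the anisotropic gradient bound only gives \emph{uniform-in-$j$} (i.e.\ $\ell^\infty$) control of the dyadic blocks of $a$, e.g.\ $\|\Delta_j a\|_{L^\infty_v(L^4_h)}\lesssim 2^{-j/2}\|\nabla a\|_{L^\infty_v(L^2_h)}$, so the pieces $T_\phi a+R(a,\phi)$ land a priori only in $\dot B^{1/2}_{2,\infty}$, not in $\dot H^{1/2}$; the $\ell^2$ summation over frequency blocks does not close with the stated norms, and it is not clear the estimate is even true at this endpoint. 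None of this is needed. The right-hand side of the lemma, $\|b\|_{\dot H^{1/2}}\|\nabla b\|_{\dot H^{1/2}}$, is exactly the interpolation bound for $\|\nabla b\|_{L^2}^2$; this signals that the pairing should be performed at the $L^2\times\dot H^1$ level rather than $\dot H^{-1/2}\times\dot H^{3/2}$. That is what the paper does: write
$$
(a\cdot\nabla b|b)_{\dot H^{\frac12}}\leq\|a\cdot\nabla b\|_{L^2}\|\nabla b\|_{L^2}\leq\|a\|_{L^\infty}\|\nabla b\|_{L^2}^2\leq\|a\|_{L^\infty}\|b\|_{\dot H^{\frac12}}\|\nabla b\|_{\dot H^{\frac12}},
$$
with no product law and no paraproduct. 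Replace your treatment of this term accordingly (and note that the same $L^2\times\dot H^1$ pairing is also how the paper launches the $b\cdot\nabla a$ term, via $\|b\cdot\nabla a\|_{L^2}\|\nabla b\|_{L^2}$).
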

\begin{proof}
By definition of the~$\dot H^{\frac 1 2}$ scalar product, we have
\beno
(a\cdot\nabla b|b)_{\dot H^{\frac 1 2}} &  \leq & \|a\cdot\nabla b\|_{L^2}\|\nabla b\|_{L^2}\\
& \leq & \|a\|_{L^\infty} \|\nabla b\|^2_{L^2}.
\eeno
The interpolation inequality between Sobolev norm gives 
$$
(a\cdot\nabla b|b)_{\dot H^{\frac 1 2}}   \leq \|a\|_{L^\infty} \|b\|_{\dot H^{\frac 1 2}} \|\nabla b\|_{\dot H^{\frac 1 2}}.
$$
Now let us estimate~$(b\cdot\nabla a|b)_{\dot H^{\frac 1 2}}$. Again we use that 
$$
(b\cdot\nabla a|b)_{\dot H^{\frac 1 2}} \leq \|b\cdot\nabla a\|_{L^2}\|\nabla b\|_{L^2}.
$$
Then let us write that
$$
 \|b\cdot\nabla a\|_{L^2}^2 = \int_{\R^3} |b(x_h,x_3)\nabla a(x_h,x_3)|^2dx_hdx_3.
$$
Gagliardo-Nirenberg's inequality in the horizontal variable implies that
$$
\forall x_{3} \in \R, \quad |b(x_h, x_3)|^2 \lesssim \|b(\cdot,x_3)\|_{\dot H_h^{\frac 1 2}}
\|\nabla_h b(\cdot,x_3)\|_{\dot H_h^{\frac 1 2}}.
$$
Let us use the Cauchy-Schwarz inequality; this gives
\beno
 \|b\cdot\nabla a\|_{L^2}^2  & \leq &  \int_{\R}  \|b(\cdot ,x_3)\|_{\dot H_h^{\frac 1 2}}
 \|\nabla_h b(\cdot,x_3)\|_{\dot H_h^{\frac 1 2}} \|\nabla a(\cdot,x_3)\|^{2}_{L^2_h} dx_3\\
  & \leq  & \|\nabla a\|^{2}_{L^\infty_v(L^2_h)}  \int_{\R}  \|b(\cdot ,x_3)\|_{\dot H_h^{\frac 1 2}}
 \|\nabla_h b(\cdot,x_3)\|_{\dot H_h^{\frac 1 2}}  dx_3 \\
  & \leq & \|\nabla a\|^{2}_{L^\infty_vL^2_h}  \|b\|_{L^2_v\dot H_h^{\frac 1 2}}
 \|\nabla_h b\|_{L^2_v\dot H_h^{\frac 1 2}}.
\eeno
When $s$  is positive,  we have, thanks to Fourier-Plancherel in the vertical variable,
\beno
\|b\|_{L^2_v(\dot H_h^s)} & = &  \int_{\R} \int_{\R^{2}}
 |\xi_h|^{2s} |\cF_hb(\xi_h,x_3)|^2 d\xi_hdx_3\\
& \sim &   \int_{\R} \int_{\R^{2}}  |\xi_h|^{2s} |\wh b(\xi_h,\xi_3)|^2d\xi_hd\xi_{3}\\
& \lesssim  &   \int_{\R^3}    |\xi|^{2s} |\wh b(\xi)|^2d\xi\\
& \lesssim &   \|b\|_{\dot H^s}^2.
\eeno
This concludes the proof of Lemma~\ref{lemmeconclutheo31}.
\end{proof}
\noindent{\bf Conclusion of the proof of Theorem\refer{theoquasi2D}. } We infer from the above lemma that
$$
\longformule
{
\left| (R^\e_\lam(t) \cdot \nabla \vapp(t)|R_\lam^\e(t))_{\dot H^{\frac 1 2}}\right|
+\left| (\vapp(t)\cdot R^\e_\lam(t)|R_\lam^\e(t))_{\dot H^{\frac 1 2}} \right| 
}
{ {}
\leq
\frac 1 4 \|\nabla R^\e_\lam(t)\|_{\dot H^{\frac 1 2}}^2
+CV_\e(t)\|R^\e_\lam(t)\|^2_{\dot H^{\frac 1 2}}.
}
$$
Together with\refeq{estimconclutheo31}, this gives
$$
\longformule{
\frac 1 2\frac d {dt} \|R^\e_\lam(t)\|^2_{\dot H^{\frac 1 2}} 
+ \|\nabla R^\e_\lam(t)\|^2_{\dot H^{\frac 1 2}} 
\leq (C-2\lam) V_\e(t)\|R^\e_\lam(t)\|^2_{\dot H^{\frac 1 2}}
}
{
{}+C e^{\lam I_0} \|R^\e_\lam(t)\|_{\dot H^{\frac 1 2}} \|\nabla R^\e_\lam(t)\|^2_{\dot H^{\frac 1 2}} 
+C\|F_\e(t)\|^2_{\dot H^{\frac 1 2}}.
}
$$
Choosing~$\lam$ such that $C-2\lam$ is negative, we infer that
$$
\frac d {dt} \|R^\e_\lam(t)\|^2_{\dot H^{\frac 1 2}} 
+(1-Ce^{\lam I_0} ) \|\nabla R^\e_\lam(t)\|^2_{\dot H^{\frac 1 2}} 
\leq C\|F_\e(t)\|^2_{\dot H^{\frac 1 2}}.
$$
Since~$R^\e(0)=0$, we get, as long as~$\|R^\e_\lam(t)\|_{\dot H^{\frac 1 2}}$ is less or equal to~$1/2 
Ce^{-\lam I_0}$, that
$$
 \|R^\e_\lam(t)\|^2_{\dot H^{\frac 1 2}} +\frac 1 2 \int_0^t  \|\nabla 
R^\e_\lam(t')\|^2_{\dot H^{\frac 1 2}} dt' \leq C_{v_0,w_0} \e ^{\frac 1 3}.
$$
We therefore obtain that~$R^\e$ goes to zero in~$L^{\infty}(\R^+;\dot H^{\frac 1 2}) \cap 
L^{2}(\R^+;\dot H^{\frac 3 2}) $. That implies that~$u^{\e}$ remains close for all times to~$\vapp$, which in particular
implies  Theorem\refer{theoquasi2D}. \hfill $\Box$

%%%%%%%%%%%%%%%%%%%%%%%%%%%%%%%%%%%%%%%%%%%%%%%%%%%%%%%%
%%%%%%%%%%%%%%%%%%%%%%%%%%%%%%%%%%%%%%%%%%%%%%%%%%%%%%%%

\section{Estimates on the approximate solution}\label{estimatesvapp}\setcounter{equation}{0}
In this section we shall prove Lemma~\ref{lemmaquasi2D1} stated in the previous section. The proof of the lemma
is achieved by obtaining estimates on~$\vp$, stated in the next lemma, as well as on~$\wpe$ (see
 Lemma~\ref{lemmaestimateprofil2} below).  
\begin{lemma}
\label{lemmaestimateprofil1}
{\sl Let $\vp^h$ be a solution of the system~$(NS2D_3)$. Then, for any $s$ greater than~$-1$ and any $\alpha \in \N^3$, we have, for any $y_3$ in~$\R$ and for any positive~$t$,
$$
\|\partial^\al \vp^h(t,\cdot,y_3)\|_{\dot H^s_h} ^2+\int_0^t
\|\partial^\al \nabla_h\vp^h(t',\cdot,y_3)\|_{\dot H^s_h} ^2 dt' \leq C_{v_0}(y_3),
$$
where~$C_{v_0}(\cdot)$ belongs to~$L^{1} \cap L^{\infty}(\R)$ and its norm is controled by a constant~$C_{v_{0}}$.
}
\end{lemma}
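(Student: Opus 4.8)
The plan is to treat $y_3$ as a frozen parameter and to run energy estimates in the horizontal variables only, carrying along at each step the dependence on $y_3$; the guiding principle is that $L^1\cap L^\infty(\R_{y_3})$ is a product-stable space, so that bounds of this type survive every multiplication and time integration. \textbf{First}, I would record the classical two dimensional energy identity: taking the $L^2_h$ scalar product of $(NS2D_3)$ with $\vp^h$, the pressure term disappears by $\dive_h\vp^h=0$ and the convection term vanishes because $\vp^h$ is divergence free, whence
$$
\|\vp^h(t,\cdot,y_3)\|_{L^2_h}^2+2\int_0^t\|\nabla_h\vp^h(t',\cdot,y_3)\|_{L^2_h}^2\,dt'=\|v_0^h(\cdot,y_3)\|_{L^2_h}^2 .
$$
The right-hand side lies in $L^1\cap L^\infty(\R_{y_3})$: the $L^1$ norm equals $\|v_0^h\|_{L^2(\R^3)}^2$ by Fubini, while the $L^\infty$ bound follows because $y_3\mapsto\|v_0^h(\cdot,y_3)\|_{L^2_h}^2$ has an $L^1(\R)$ derivative (namely $2\int v_0^h\,\partial_3v_0^h\,dx_h$, both factors being in $L^2(\R^3)$) and hence belongs to $W^{1,1}(\R)\hookrightarrow L^\infty(\R)$. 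In particular $\int_0^\infty\|\nabla_h\vp^h\|_{L^2_h}^2\,dt'$ is bounded uniformly in $y_3$.

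\textbf{Next}, for $s\in\,]-1,1[$ I would perform an $\dot H^s_h$ energy estimate. The pressure term again drops by the divergence free condition, and the convection term is estimated by\refeq{estimesiam} with $d=2$, for which $\|\nabla_h\vp^h\|_{\dot H^{0}_h}=\|\nabla_h\vp^h\|_{L^2_h}$:
$$
\bigl(\vp^h\cdot\nabla_h\vp^h\,\big|\,\vp^h\bigr)_{\dot H^s_h}\le C\,\|\nabla_h\vp^h\|_{L^2_h}\,\|\vp^h\|_{\dot H^s_h}\,\|\nabla_h\vp^h\|_{\dot H^s_h}.
$$
A Young inequality absorbs half of the dissipation and leaves $\frac{d}{dt}\|\vp^h\|_{\dot H^s_h}^2+\|\nabla_h\vp^h\|_{\dot H^s_h}^2\le C\|\nabla_h\vp^h\|_{L^2_h}^2\|\vp^h\|_{\dot H^s_h}^2$; Gr\"onwall's lemma, fed by the uniform bound on $\int_0^\infty\|\nabla_h\vp^h\|_{L^2_h}^2\,dt'$, then gives
$$
\|\vp^h(t,\cdot,y_3)\|_{\dot H^s_h}^2+\int_0^t\|\nabla_h\vp^h(t',\cdot,y_3)\|_{\dot H^s_h}^2\,dt'\le C\,\|v_0^h(\cdot,y_3)\|_{\dot H^s_h}^2 .
$$
The needed $L^1\cap L^\infty(\R_{y_3})$ control of $\|v_0^h(\cdot,y_3)\|_{\dot H^s_h}^2$ comes by interpolation --- between $\dot H^{-1}_h$ (the hypothesis, to which the $W^{1,1}\hookrightarrow L^\infty$ argument applies verbatim since every $x_3$-derivative of $v_0^h$ also lies in $L^2(\R_{x_3};\dot H^{-1}_h)$) and $L^2_h$ for $s\le0$, and between consecutive $\dot H^m_h$ for $s\ge0$, using that all derivatives of $v_0^h$ are in $L^2(\R^3)$ --- combined with H\"older in $y_3$.

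\textbf{Then} I would reach every $s>-1$ and every multi-index. Horizontal derivatives cost nothing, since $\|\partial_h^\beta\vp^h\|_{\dot H^s_h}\le\|\vp^h\|_{\dot H^{s+|\beta|}_h}$; and $s\ge1$ is reached by bootstrapping from the enstrophy bound (the special two dimensional structure, visible on the scalar vorticity transport--diffusion equation) through an induction on $s$, the low-order factor in the product estimate being supplied by the previous step. The genuinely new point is the parameter derivatives $\alpha_3=k>0$. Writing $\vp^h_k\eqdefa\partial_{y_3}^k\vp^h$ and differentiating $(NS2D_3)$ $k$ times in $y_3$ (note $\dive_h\vp^h_k=0$), one obtains the linear transport--diffusion system
$$
\partial_t\vp^h_k+\vp^h\cdot\nabla_h\vp^h_k+\vp^h_k\cdot\nabla_h\vp^h-\Delta_h\vp^h_k=-\nabla_h\partial_{y_3}^k\presspO-\sum_{0<j<k}\binom{k}{j}\,\vp^h_j\cdot\nabla_h\vp^h_{k-j},
$$
with datum $\partial_{y_3}^k v_0^h(\cdot,y_3)$. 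I would close by induction on $k$: the transport term is absorbed exactly as above through\refeq{estimesiam}; the stretching term $\vp^h_k\cdot\nabla_h\vp^h$ is linear in $\vp^h_k$ with coefficients already controlled at order $k=0$ and thus feeds Gr\"onwall; the pressure term drops; and the remaining quadratic source involves only the lower orders $\vp^h_j$, $j<k$. The $L^1\cap L^\infty(\R_{y_3})$ nature of the data and coefficients is preserved throughout because that space is stable under products and because the hypothesis places every derivative of $v_0^h$ in $L^2(\R_{x_3};\dot H^{-1}_h)$ as well as in every $\dot H^s_h$. The main obstacle is precisely this final bookkeeping --- simultaneously propagating the Gr\"onwall estimate and the $y_3$-integrability through the double induction in $k$ and $s$; the analytic heart of the matter, the control of the convection term for negative $s$, is delivered by\refeq{estimesiam}.
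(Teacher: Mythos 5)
Your proposal is correct and follows essentially the same route as the paper: the parameter-frozen $L^2_h$ energy identity, the $\dot H^s_h$ estimate closed by\refeq{estimesiam} with $d=2$ and Gr\"onwall, and a Leibniz-type induction on the number of derivatives with Sobolev product laws controlling the commutator terms. The only differences are organizational --- the paper runs a single induction on $|\alpha|$ over all of $\N^3$ for $s\in\,]-1,1[$ and observes that larger $s$ reduces to adding horizontal derivatives, rather than invoking a separate vorticity bootstrap --- plus your welcome explicit verification that the resulting constant lies in $L^1\cap L^\infty(\R_{y_3})$, which the paper leaves implicit.
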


\begin{proof}
For~$s=0$ and~$\alpha =0$, the estimate is simply the classical~$L^2$-energy estimate with~$y_3$ as a parameter:
writing~$\vp = (\vp^h,0)$ we have
\beq
\label{estimenergy2Dparameter}
\|\vp(t,\cdot,y_3)\|_{L^2_h} ^2+2\int_0^t
\| \nabla_h\vp(t',\cdot,y_3)\|_{L^2_h} ^2 dt' =\|\vp_0(\cdot,y_3)\|_{L^2_h} ^2.
\eeq
In the case when~$\alpha=0$,  the estimate (i) in Lemma 1.1 of\ccite{chemin10} gives, for any $s$ greater than~$-1$,
$$
\frac 1 2 \frac d {dt}  \|\vp (t,\cdot,y_3)\|^2_{\dot H^s_h} + 
\|\nabla_h\vp (t,\cdot,y_3)\|^2_{\dot H^s_h} \leq C \| \nabla_h\vp(t,\cdot,y_3)\|_{L^2_h}
 \|\vp (t,\cdot,y_3)\|_{\dot H^s_h} \|\nabla_h\vp (t,\cdot,y_3)\|_{\dot H^s_h}.
$$
We infer that
$$
 \frac d {dt}  \|\vp (t,\cdot,y_3)\|^2_{\dot H^s_h} + \|\nabla_h\vp (t,\cdot,y_3)\|^2_{\dot H^s_h} \leq C \| \nabla_h\vp(t,\cdot,y_3)\|^2_{L^2_h} \|\vp (t,\cdot,y_3)\|_{\dot H^s_h} ^2.
$$
Gronwall's lemma ensures that
$$
\|\vp (t,\cdot,y_3)\|^2_{\dot H^s_h} + \int_0^t\|\nabla_h\vp (t',\cdot,y_3)\|^2_{\dot H^s_h} dt'
 \leq \|\vp_0 (\cdot,y_3)\|_{\dot H^s_h} ^2\exp \Bigl (C\!\int_0^t \| \nabla_h\vp(t',\cdot,y_3)\|^2_{L^2_h} dt'\Bigr).
$$
The energy estimate\refeq{estimenergy2Dparameter} implies that
$$
\|\vp (t,\cdot,y_3)\|^2_{\dot H^s_h} + \int_0^t\|\nabla_h\vp (t',\cdot,y_3)\|^2_{\dot H^s_h} dt'
 \leq \|\vp_0 (\cdot,y_3)\|_{\dot H^s_h} ^2\exp \left (C \|\vp_0\|_{L^\infty_vL^2_h} ^2\right).
$$
This proves the lemma in the case when~$\al=0$.  Let us now turn to the general case, by induction on the
 length of~$\alpha$.
It is clear that in the proof, we can restrict
 ourselves to the case when $s\in ]-1,1[$. 

Let us assume that, for some~$k \in \N$, 
\beq
\label{eq1demolemmaestimateprofil1}
\forall s \in ]-1,1[, \quad \sum_{|\al|\leq k} \biggl(\|\partial^\al \vp(t,\cdot,y_3)\|_{\dot H^s_h} ^2+\int_0^t
\|\partial^\al \nabla_h\vp(t',\cdot,y_3)\|_{\dot H^s_h} ^2 dt'\biggr)
\leq  C_{k,v_0}(y_3),
\eeq
with~$C_{k,v_0}(\cdot) \in L^{1} \cap L^{\infty}(\R)$.

Thanks to the Leibnitz formula  we have, for~$|\alpha| \leq k+1$,
$$
\partial_t \partial^\al \vp^h + \vp^h \cdot \nabla_h  \partial^\al  \vp^h -\Delta_h  \partial^\al  \vp^h = -\nabla_h p_\alpha -\sumetage{\beta\leq \alpha}{\beta\not =\alpha} C_\alpha^\beta  \partial^{\al -\beta} \vp^h\cdot\nabla_h\ \partial^\beta\vp^h.
$$
Performing a~$\dot H^s_h$ energy estimate in the horizontal variable and using the 
estimate\refeq{estimesiam} in the case when~$d=2$ gives 
$$
\displaylines{
\frac 1 2 \frac d {dt}  \| \partial^\al\vp (t,\cdot,y_3)\|^2_{\dot H^s_h} + \|\nabla_h\partial^\al\vp (t,\cdot,y_3)\|^2_{\dot H^s_h} \cr\leq C \| \nabla_h\vp(t,\cdot,y_3)\|_{L^2_h} \| \partial^\al\vp (t,\cdot,y_3)\|_{\dot H^s_h} \|\nabla_h \partial^\al\vp (t,\cdot,y_3)\|_{\dot H^s_h}\cr
+C_\al \sumetage{\beta\leq \alpha}{\beta\not =\alpha} \Bigl|
\left(\partial^{\al -\beta} \vp^h(t,\cdot,y_3)\cdot\nabla_h\ \partial^\beta\vp^h(t,\cdot,y_3)
\big|\partial^\al\vp¬h (t,\cdot,y_3)\right)_{\dot H^s_h}\Bigr|.
}
$$
To estimate the last term, we shall treat differently   the case~$|\beta |= 0$ and~$|\beta |\neq 0$. In the first case, 
we
notice first that when~$s = 0$,  laws of product for Sobolev spaces in~$\R^2$ give
\begin{eqnarray*}
 \! \! \! \!  \! \! \! \! \!  \!    \left(\partial^{\al } \vp^h(t,\cdot,y_3)\cdot\nabla_h \vp^h(t,\cdot,y_3)
\big|\partial^\al\vp^h (t,\cdot,y_3)\right)_{L^{2}_h} & \lesssim & \|\partial^{\al } 
\vp^h(t,\cdot,y_3)\|_{\dot H^{\frac12}}^{2} \| \nabla_h \vp^h(t,\cdot,y_3) \|_{L^{2}}\\
  \lesssim \|\partial^{\al } 
\vp^h(t,\cdot,y_3)\|_{L^{2}_{h} } &&   \! \! \! \!  \! \! \! \! \!  \! \! \! \! \! \! 
\| \nabla_h \partial^{\al } 
\vp^h(t,\cdot,y_3) \|_{L^{2}_{h}} \| \nabla_h \vp^h(t,\cdot,y_3) \|_{L^{2}_{h}}.
\end{eqnarray*}
 If~$s>0$,  
then again  laws of product for Sobolev spaces in~$\R^2$ give, for~$s\in ]0,1[$,
$$
\Bigl| \left(\partial^{\al } \vp^h(t,\cdot,y_3)\cdot\nabla_h \vp^h(t,\cdot,y_3)
\big|\partial^\al\vp (t,\cdot,y_3)\right)_{\dot 
H^{s}_h}\Bigr| \lesssim \|\partial^\al\vp^{h}\|_{\dot H^{s}_h}
\|\nabla_h \vp^h\|_{L^{2}_{h}} \|\nabla_h\partial^\al\vp^{h}\|_{\dot H^{s}_h},
$$
whereas if~$-1<s<0$,  
$$
\Bigl| \left(\partial^{\al } \vp^h(t,\cdot,y_3)\cdot\nabla_h \vp^h(t,\cdot,y_3)
\big|\partial^\al\vp (t,\cdot,y_3)\right)_{\dot H^{s}_h}\Bigr| \lesssim \|\nabla_h
\partial^\al\vp^{h}\|_{\dot H^{s}_h}
\|\nabla_h \vp^h\|_{L^{2}_{h}} \|\partial^\al\vp^{h}\|_{\dot H^{s}_h}.
$$
So in any case we have
$$
\Bigl| \left(\partial^{\al } \vp^h(t,\cdot,y_3)\cdot\nabla_h \vp^h(t,\cdot,y_3)
\big|\partial^\al\vp (t,\cdot,y_3)\right)_{\dot H^{s}_h}\Bigr| \leq
\frac 14  \|\nabla_h\partial^\al\vp^{h}\|_{\dot H^{s}_h}^{2} + C
\|\nabla_h \vp^h\|_{L^{2}_{h}}^{2} \|\partial^\al\vp^{h}\|_{\dot H^{s}_h}^{2}.
$$
Now let us consider the case when~$|\beta| \neq 0$. As the horizontal divergence of~$\vp$ is identically~$0$, we have
$$
\Bigl|
\left(\partial^{\al -\beta} \vp^h\cdot\nabla_h\ \partial^\beta\vp^h\big|\partial^\al\vp (t,\cdot,y_3)\right)_{\dot H^s_h}\Bigr| \leq
\|\partial^{\al -\beta} \vp^h(t,\cdot,y_3)\otimes \partial^\beta\vp^h(t,\cdot,y_3)\|_{\dot H^s_h} 
\|\nabla_h\partial^\al\vp (t,\cdot,y_3)\|_{\dot H^s_h}.
$$
Laws of product for Sobolev spaces in~$\R^2$ give, for~$s\in ]-1,1[$,  
$$ 
\|\partial^{\al -\beta} \vp^h\otimes \partial^\beta\vp^h\|_{\dot H^s_h} \leq C
\|\partial^{\al -\beta} \vp^h\|_{\dot H^{s'}_h}\|\partial^{\beta}\nabla_h\vp^h\|_{\dot H^{s-s'}_h},
$$
where~$s'$ is chosen so that~$s<s'<1$.

Finally we deduce that
$$
\longformule{
  \frac d {dt} 
 \| \partial^\al\vp (t,\cdot,y_3)\|^2_{\dot H^s_h} + \|\nabla_h\partial^\al\vp (t,\cdot,y_3)\|
^2_{\dot H^s_h}
\leq C \|\nabla_h \vp \|_{L^{2}_{h}}^{2} \|\partial^\al\vp \|_{\dot H^{s}_h}^{2}
}
{{} +
C_\al 
 \sumetage{\beta \leq \alpha}{\beta\notin \{0,\alpha\}}
\|\partial^{\al -\beta} \vp  (t,\cdot,y_3)\|_{\dot H^{s'}_h}
\|\partial^{\beta}\nabla_h\vp  (t,\cdot,y_3)\|_{\dot H^{s-s'}_h} 
\|\nabla_h \partial^\al\vp (t,\cdot,y_3)\|_{\dot H^s_h}.
}
$$
%Thus we have
%$$
%%\longformule
%{
% \frac d {dt}  \| \partial^\al\vp (t,\cdot,y_3)\|^2_{\dot H^s_h} + \|\nabla_h\partial^\al\vp (t,\cdot,y_3)\|^2_{\dot H^s_h}
%}
%{{} \leq 
%C_\al  \sumetage{\beta\leq \alpha}{\beta\not =\alpha}\|\partial^{\al -\beta} \vp^h (t,\cdot,y_3)\|_{\dot H^s_h}^2\|\partial^{\beta}\nabla_h\vp^h (t,\cdot,y_3)\|_{L^2_h}^2.
%}
%$$
%In order to use the induction hypothesis, let us write that
%$$
%\displaylines{
% \frac d {dt} \Bigl(  \sum_{|\alpha|=k+1} \| \partial^\al\vp (t,\cdot,y_3)\|^2_{\dot H^s_h}\Bigr) + 
% \sum_{|\alpha|=k+1}\|\nabla_h\partial^\al\vp (t,\cdot,y_3)\|^2_{\dot H^s_h}\cr
%{} \leq 
%C_k  \Bigl(  \sum_{|\alpha|=k+1}  \| \partial^\al\vp (t,\cdot,y_3)\|^2_{\dot H^s_h} \Bigr)\| \nabla_h\vp(t',\cdot,y_3)\|^2_{L^2_h} \cr
%{}+  \sum_{|\beta|,|\gamma\leq k} 
%\|\partial^{\beta} \vp^h (t,\cdot,y_3)\|_{\dot H^s_h}^2\|\partial^{\gamma}\nabla_h\vp^h (t,\cdot,y_3)\|_{L^2_h}^2.
%}
%$$
Gronwall's lemma together with the induction hypothesis\refeq{eq1demolemmaestimateprofil1}
implies that
$$
\longformule{
 \sum_{|\al|= k+1} \biggl(\|\partial^\al \vp(t,\cdot,y_3)\|_{\dot H^s_h} ^2+\int_0^t
\|\partial^\al \nabla_h\vp(t',\cdot,y_3)\|_{\dot H^s_h} ^2 dt'\biggr)
}
{{} \lesssim \biggl( \sum_{|\al|= k+1}\|\partial^\al \vp_0(\cdot,y_3)\|_{\dot H^s_h} ^2+ C_{k,v_0}(y_3)\biggr)
 \exp \Bigl (C_k\int_0^t \| \nabla_h\vp(t',\cdot,y_3)\|^2_{L^2_h} dt'\Bigr).
}
$$
The~$L^2$ energy estimate\refeq{estimenergy2Dparameter} allows to conclude the proof of 
Lemma~\ref{lemmaestimateprofil1}.
\end{proof}
From this lemma, we deduce the following corollary.
\begin{corol}
\label{corestimateprofil1}
{\sl Let $\vp^h$ be a solution of the system~$(NS2D_3)$. Then, for any non negative~$\sigma$, we have
$$
\|\vp^h\|_{L^2(\R^+;\dot H^\sigma(\R^3))} \leq C_{v_0}\and \|\partial ^\alpha \vp^h\|_{L^2(\R^+;L^\infty_v\dot H^\s_h)} \leq C_{v_0}.
$$
}
\end{corol}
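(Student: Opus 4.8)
The plan is to deduce both estimates from Lemma~\ref{lemmaestimateprofil1} by converting its pointwise–in–$y_3$ bounds into the anisotropic space--time norms of the statement. The key structural observation is that in~$(NS2D_3)$ the vertical variable~$x_3=y_3$ is only a smooth parameter, so every vertical derivative~$\partial_3^k\vp^h$ is again controlled by the lemma, applied to the multi-index~$\alpha=(0,0,k)$.

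\textbf{Step 1 (the basic building block).} First I would record that, for every~$\al\in\N^3$ and every~$\tau\geq 0$,
\begin{equation}\label{block}
\int_0^{\infty}\|\partial^\al\vp^h(t,\cdot,y_3)\|_{\dot H^\tau_h}^2\,dt\leq C_{v_0}(y_3),\qquad C_{v_0}(\cdot)\in L^1\cap L^\infty(\R).
\end{equation}
For~$\tau>0$ this is exactly the dissipation term of Lemma~\ref{lemmaestimateprofil1} with~$s=\tau-1>-1$, since~$\|\partial^\al\nabla_h\vp^h\|_{\dot H^{\tau-1}_h}\sim\|\partial^\al\vp^h\|_{\dot H^\tau_h}$. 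The endpoint~$\tau=0$ is the delicate one, and I would obtain it by letting~$s\downarrow -1$ in the lemma: because~$v_0^h$ and all its derivatives belong to~$L^2(\R_{x_3};\dot H^{-1}_h)$, the initial-data norms~$\|\partial^\al\vp_0(\cdot,y_3)\|_{\dot H^s_h}$ stay bounded by~$\|\partial^\al\vp_0(\cdot,y_3)\|_{\dot H^{-1}_h}$ as~$s\downarrow-1$, so the right-hand side of the lemma remains uniformly finite; a Fatou/dominated-convergence argument in the time integral (using~$|\xi_h|^{2(s+1)}\to1$) then yields~\eqref{block} at~$\tau=0$. Integrating~\eqref{block} in~$y_3$ and using~$C_{v_0}\in L^1$ gives the integrated form
\begin{equation}\label{blockint}
\int_0^{\infty}\|\partial^\al\vp^h(t)\|_{L^2_v\dot H^\tau_h}^2\,dt\leq C_{v_0},\qquad \tau\geq0 .
\end{equation}

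\textbf{Step 2 (first estimate).} Using~$|\xi|^{2\sigma}\lesssim|\xi_h|^{2\sigma}+|\xi_3|^{2\sigma}$ together with~$|\xi_3|^{2\sigma}\le 1+|\xi_3|^{2\lceil\sigma\rceil}$ (valid for~$\sigma\ge0$), Plancherel in the vertical variable gives
$$
\|\vp^h(t)\|_{\dot H^\sigma(\R^3)}^2\lesssim \|\vp^h(t)\|_{L^2_v\dot H^\sigma_h}^2+\|\vp^h(t)\|_{L^2_vL^2_h}^2+\|\partial_3^{\lceil\sigma\rceil}\vp^h(t)\|_{L^2_vL^2_h}^2 .
$$
Integrating in time, each of the three terms is an instance of~\eqref{blockint} (with~$\al=0,\ \tau=\sigma$; with~$\al=0,\ \tau=0$; and with~$\al=(0,0,\lceil\sigma\rceil),\ \tau=0$), whence~$\|\vp^h\|_{L^2(\R^+;\dot H^\sigma(\R^3))}\leq C_{v_0}$.

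\textbf{Step 3 (second estimate).} Here I would use the one-dimensional Sobolev inequality in the vertical variable, valued in the Hilbert space~$\dot H^\sigma_h$,
$$
\sup_{y_3}\|\partial^\al\vp^h(t,\cdot,y_3)\|_{\dot H^\sigma_h}^2\lesssim \|\partial^\al\vp^h(t)\|_{L^2_v\dot H^\sigma_h}\,\|\partial_3\partial^\al\vp^h(t)\|_{L^2_v\dot H^\sigma_h},
$$
which follows from~$\|h\|_{L^\infty_v(X)}^2\le 2\|h\|_{L^2_v(X)}\|\partial_3 h\|_{L^2_v(X)}$. Integrating in~$t$ and applying the Cauchy--Schwarz inequality in time reduces the square of the target norm to the product of two integrals of the form~\eqref{blockint} (for~$\partial^\al\vp^h$ and~$\partial^{\al+(0,0,1)}\vp^h$, at level~$\tau=\sigma$), each bounded by~$C_{v_0}$. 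This gives~$\|\partial^\al\vp^h\|_{L^2(\R^+;L^\infty_v\dot H^\sigma_h)}\leq C_{v_0}$.

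\textbf{Main obstacle.} The only genuinely delicate point is the endpoint~$\tau=0$ in~\eqref{block}: the product estimate~\eqref{estimesiam} degenerates at~$s=-1$, so one cannot run the energy estimate directly there, and two-dimensional Navier--Stokes by itself does not make~$\|\vp^h(t,\cdot,y_3)\|_{L^2_h}$ square-integrable in time. It is precisely the hypothesis~$v_0^h\in L^2(\R_{x_3};\dot H^{-1}_h)$, together with all its derivatives, that supplies this time integrability through the limiting argument above; everything else is routine bookkeeping in anisotropic Sobolev norms.
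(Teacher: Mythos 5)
Your Steps 2 and 3 are sound and essentially reproduce the paper's own bookkeeping (vertical Gagliardo--Nirenberg plus Cauchy--Schwarz in time, and integration over $y_3$ of the pointwise-in-$y_3$ bounds of Lemma\refer{lemmaestimateprofil1}). The gap is in Step 1 at the endpoint $\tau=0$ --- which, as you note, is the only non-routine point, and on which the rest of your argument rests (your Step 2 needs the $\tau=0$ block even for $\sigma>0$). The limit $s\downarrow -1$ does not go through: the constant $C_{v_0}(y_3)$ produced by Lemma\refer{lemmaestimateprofil1} is not just the initial-data norm $\|\vp_0(\cdot,y_3)\|^2_{\dot H^s_h}$ but that norm multiplied by $\exp\bigl(C(s)\|v_0(\cdot,y_3)\|^2_{L^2_h}\bigr)$, where $C(s)$ is the constant of the trilinear estimate\refeq{estimesiam}. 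That estimate holds only for $s\in\,]-d/2,d/2[$ and its constant blows up as $s\to-1$ when $d=2$: this is the endpoint failure of the product law $L^2(\R^2)\times L^2(\R^2)\to\dot H^{-1}(\R^2)$ needed to handle $v\otimes v$ in a $\dot H^{-1}$ energy estimate. So the right-hand side of the lemma is not uniformly finite as $s\downarrow-1$, Fatou gives nothing, and indeed no energy estimate alone can yield $\vp(\cdot,y_3)\in L^2(\R^+;L^2_h)$ --- for two-dimensional Navier--Stokes with $L^2$ data the $L^2$ norm is merely non-increasing.

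The hypothesis $v_0^h\in L^2(\R_{x_3};\dot H^{-1}_h)$ must therefore enter through a different mechanism, which is what the paper does: write $(NS2D_3)$ as a forced heat equation $\partial_t\vp-\Delta_h\vp=f$ with $f=\sum_{j,k}Q_{j,k}(D)(\vp^j\vp^k)$ and $Q_{j,k}$ homogeneous of order one, and use the smoothing estimate $\|e^{t\Delta_h}\vp_0(\cdot,y_3)\|_{L^2(\R^+;L^2_h)}\lesssim\|\vp_0(\cdot,y_3)\|_{\dot H^{-1}_h}$ together with $\|f(\cdot,y_3)\|_{L^1(\R^+;\dot H^{-1}_h)}\lesssim\|\vp(\cdot,y_3)\|^2_{L^2(\R^+;\dot H^{1/2}_h)}$. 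Estimating the nonlinearity in $L^1$ in time via Duhamel, rather than pointwise in time inside an energy identity, is what sidesteps the endpoint failure; the $\dot H^{1/2}_h$ norm on the right is then the case $s=-1/2$ of Lemma\refer{lemmaestimateprofil1}, and one integrates in $y_3$ at the end. If you replace the endpoint of your Step 1 by this argument, the remainder of your proof goes through.
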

\begin{proof}
To start with, let us assume~$\s>0$. Lemma\refer{lemmaestimateprofil1}  applied with~$ s=\s-1$ implies that
\beq
\label{corestimateprofil1eq1}
\forall \s >0\,,\ \forall \al \in \N\,,\  \|\partial ^\alpha \vp^h\|_{L^2(\R^+;L^2_v\dot H^\s_h)}
 \leq C_{v_0}.
\eeq
Then, for any non negative~$\s$, we have
\beno
\|\partial ^\al \vp (t,\cdot,y_3)\|^2_{\dot H^\s_h} & = & 2\int_{-\infty} ^{y_3} (\partial_3\partial^\al \vp (t,\cdot ,y'_3)|\partial^\al \vp (t,\cdot ,y'_3)_{\dot H^\s_h} dy'_3\\
 & \leq & 2  \|\partial_3\partial ^\alpha \vp^h(t,\cdot)\|_{L^2_v\dot H^\s_h}\|\partial ^\alpha \vp^h(t,\cdot)\|_{L^2_v\dot H^\s_h}
\eeno
By the Cauchy-Schwarz inequality, we have
$$
\forall \s \geq 0\,,\ \forall \al \in \N\,,\  \|\partial ^\alpha \vp^h\|_{L^2(\R^+;L^\infty_v\dot H^\s_h)} \leq \|\partial_3\partial ^\alpha \vp^h\|_{L^2(\R^+;L^\infty_v\dot H^\s_h)}^{\frac 1 2}\|\partial ^\alpha \vp^h\|_{L^2(\R^+;L^\infty_v\dot H^\s_h)}^{\frac 1 2}.
$$
From\refeq{corestimateprofil1eq1}, we infer
\beq
\label{corestimateprofil1eq3}
\forall \s >0\,,\ \forall \al \in \N\,,\  \|\partial ^\alpha \vp^h\|_{L^2(\R^+;L^\infty_v\dot H^\s_h)} \leq C_{v_0}.
\eeq
Now, by interpolation,  it is enough to prove the first inequality with~$\s=0$. The system~$(NS2D_3)$ can be written
$$
\left\{\begin{array}{c}
\partial_t \vp -\Delta_h \vp =f\\
\vp_{|t=0 }= \vp_0(\cdot,y_3)
\end{array}
\right. \with f\eqdefa \sum_{1\leq j,k\leq 2} Q_{j,k}(D) (\vp^j\vp^k).
$$
where~$Q_{j,k}$ are homogenenous smooth Fourier multipliers of order~$1$. By Sobolev embeddings in~$\R^2$, we get, for any~$y_3$ in~Ê$\R$,
\beno
\|\vp(\cdot,y_3)\|_{L^2(\R^+\times \R^2)} & \leq & \|\vp_0(\cdot,y_3)\|_{\dot H_h^{-1}} +\|f(\cdot,y_3)\|_{L^1(\R^+;\dot H_h^{-1})}\\
& \leq &  \|\vp_0(\cdot,y_3)\|_{\dot H_h^{-1}} +C \|\vp (\cdot,y_3)\|_{L^2(\R^+;\dot H^{\frac 1 2}_h)}^2\\
 & \leq  &  \|\vp_0(\cdot,y_3)\|_{\dot H_h^{-1}} +C \|\vp (\cdot,y_3)\|_{L^2(\R^+;\dot H^{\frac 1 2}_h)}\sup_{y_3}\|\vp (\cdot,y_3)\|_{L^2(\R^+;\dot H^{\frac 1 2}_h)}.
\eeno
As~$\sup_{y_3}\|\vp (\cdot,y_3)\|_{L^2(\R^+;\dot H^{\frac 1 2}_h)}\leq \|\vp\|_{L^2(\R^+;L^\infty_v\dot H^{\frac 1 2}_h)}$, we infer from\refeq{corestimateprofil1eq3}
$$
\|\vp\|_{L^2(\R^+\times \R^3)}^2 \lesssim  \|\vp_0\|_{L^2_v\dot H^{-1}_h}^2+C_{v_0}
\|\vp\|^2_{L^2(\R^+;L^2_v\dot H^{\frac 1 2}_h)} \leq C_{v_0}.
$$
The corollary is proved.
\end{proof}
Finally we have the following estimate on~$\wpe$.
\begin{lemma}
\label{lemmaestimateprofil2}
{\sl Let $\wpe$ be a solution of the system~$(T^\e_\vp)$. Then, for any $s$ greater than~$-1$ 
and any $\alpha \in \N^3$ and for any positive~$t$, we have
$$
\|\partial^\al \wpe(t,\cdot)\|_{L^2_v\dot H^s_h} ^2+\int_0^t
\|\partial^\al \nabla_h\wpe(t',\cdot)\|_{L^2_v\dot H^s_h} ^2 dt' \leq C_{v_0,w_0}.
$$
}
\end{lemma}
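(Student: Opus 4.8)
The plan is to prove Lemma\refer{lemmaestimateprofil2} by induction on the length of $\al$, performing for each $\al$ an $L^2_v\dot H^s_h$ energy estimate on $\p^\al\wp^\e$, in close analogy with the proof of Lemma\refer{lemmaestimateprofil1}. As there, it suffices to treat $s\in\,]-1,1[$, the remaining values being recovered by trading powers of $|\nabla_h|$ against the horizontal components of $\p^\al$. Applying $\p^\al$ to the momentum equation of $(T^\e_\vp)$, using the Leibnitz formula on $\vp^h\cdot\nabla_h\wp^\e$ and taking the $L^2_v\dot H^s_h$ scalar product with $\p^\al\wp^\e$ gives, up to signs,
$$
\frac12\frac{d}{dt}\|\p^\al\wp^\e\|_{L^2_v\dot H^s_h}^2+\|\nabla_h\p^\al\wp^\e\|_{L^2_v\dot H^s_h}^2+\e^2\|\p_3\p^\al\wp^\e\|_{L^2_v\dot H^s_h}^2=T_\al+L_\al+P_\al,
$$
where $T_\al=(\vp^h\cdot\nabla_h\p^\al\wp^\e\,|\,\p^\al\wp^\e)_{L^2_v\dot H^s_h}$ is the leading transport term, $L_\al$ collects the Leibnitz remainders $(\p^\b\vp^h\cdot\nabla_h\p^{\al-\b}\wp^\e\,|\,\p^\al\wp^\e)_{L^2_v\dot H^s_h}$ with $0\neq\b\leq\al$, and $P_\al$ is the pressure contribution. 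The vertical dissipation $\e^2\|\p_3\p^\al\wp^\e\|^2$ has a favourable sign and is simply kept. The case $\al=0$ (where $L_\al$ is empty) serves as the base of the induction.

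The terms $T_\al$ and $L_\al$ are handled exactly as in Lemma\refer{lemmaestimateprofil1}. The term $T_\al$ vanishes when $s=0$ because $\dive_h\vp^h=0$; for $s\in\,]-1,1[\setminus\{0\}$ the estimate\refeq{estimesiam} with $d=2$, applied at fixed $x_3$ and then integrated over the vertical variable, bounds it by $\frac14\|\nabla_h\p^\al\wp^\e\|_{L^2_v\dot H^s_h}^2+C\|\nabla_h\vp\|_{L^\infty_vL^2_h}^2\|\p^\al\wp^\e\|_{L^2_v\dot H^s_h}^2$. The remainders $L_\al$ are controlled by horizontal laws of product, the $\vp$-factors being estimated by Lemma\refer{lemmaestimateprofil1} and Corollary\refer{corestimateprofil1} and the lower-order $\wp^\e$-factors by the induction hypothesis. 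The decisive point for a global-in-time Gronwall argument is that every coefficient produced this way lies in $L^1(\R^+)$: indeed Corollary\refer{corestimateprofil1} guarantees that $\|\vp\|_{L^\infty}^2+\|\nabla\vp\|_{L^\infty_vL^2_h}^2$ is integrable on $\R^+$, with integral bounded by $C_{v_0}$.

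The genuinely new difficulty, and the main obstacle, is the pressure term $P_\al$. Since the forcing in $(T^\e_\vp)$ is the \emph{anisotropic} gradient $(\nabla_h\presspun,\e^2\p_3\presspun)$ and not a full gradient, it is not orthogonal to the divergence free field $\p^\al\wp^\e$; integrating by parts and using $\dive\wp^\e=0$ one finds
$$
P_\al=-(1-\e^2)\bigl(\p^\al\presspun\,\big|\,\p_3\p^\al\wp^{\e,3}\bigr)_{L^2_v\dot H^s_h}=(1-\e^2)\bigl(\p^\al\presspun\,\big|\,\dive_h\p^\al\wp^{\e,h}\bigr)_{L^2_v\dot H^s_h}.
$$
The key algebraic observation is that, because $\vp^h$ is horizontally divergence free and $\wp^\e$ is divergence free, the source of the pressure can be written in horizontal divergence form:
$$
\dive(\vp^h\cdot\nabla_h\wp^\e)=\dive_h H,\qquad H^i\eqdefa\sum_{j=1}^3(\p_j\vp^i)\,\wp^{\e,j}=\wp^\e\cdot\nabla\vp^i\quad(i=1,2).
$$
Thus $\presspun=(-\Delta_h-\e^2\p_3^2)^{-1}\dive_h H$, and the operator $\nabla_h(-\Delta_h-\e^2\p_3^2)^{-1}\dive_h$ carries the symbol $\xi_h\otimes\xi_h/(|\xi_h|^2+\e^2\xi_3^2)$, whose norm is bounded by $1$ \emph{uniformly in} $\e$. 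The same computation gives the slightly stronger bound $\|\p^\al\presspun\|_{L^2_v\dot H^s_h}\lesssim\|\p^\al H\|_{L^2_v\dot H^{s-1}_h}$, since $|\xi_h|^{s+1}/(|\xi_h|^2+\e^2\xi_3^2)\leq|\xi_h|^{s-1}$. Bounding $\dive_h\p^\al\wp^{\e,h}$ by the dissipation, Young's inequality then yields
$$
|P_\al|\leq\frac14\|\nabla_h\p^\al\wp^\e\|_{L^2_v\dot H^s_h}^2+C\|\p^\al H\|_{L^2_v\dot H^{s-1}_h}^2.
$$
This divergence identity is exactly what turns the a priori singular anisotropic multiplier into one controlled uniformly in $\e$, which is the crux of the whole estimate.

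It then remains to estimate $\|\p^\al H\|_{L^2_v\dot H^{s-1}_h}$, a quantity of the type $\wp^\e\cdot\nabla\vp$ already met in Lemma\refer{lemmeconclutheo31}. Expanding $\p^\al H$ by Leibnitz, the top term $\p^\al\wp^\e\cdot\nabla\vp$ is bounded, through the horizontal product law for $s>0$ (and, for $s\leq0$, by keeping a share of the dissipation on the $\wp^\e$ factor, exactly as in the negative-$s$ case of Lemma\refer{lemmaestimateprofil1}), by $\|\nabla\vp\|_{L^\infty_vL^2_h}\|\p^\al\wp^\e\|_{L^2_v\dot H^s_h}$, while the remaining terms carry fewer derivatives on $\wp^\e$ and are handled by the induction hypothesis. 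All resulting coefficients are again majorised by $\|\nabla\vp\|_{L^\infty_vL^2_h}^2\in L^1(\R^+)$. Collecting everything, we reach
$$
\frac{d}{dt}\|\p^\al\wp^\e\|_{L^2_v\dot H^s_h}^2+\|\nabla_h\p^\al\wp^\e\|_{L^2_v\dot H^s_h}^2\leq g(t)\,\|\p^\al\wp^\e\|_{L^2_v\dot H^s_h}^2+C_{v_0,w_0}\,\rho_\al(t),
$$
with $g\in L^1(\R^+)$ and $\rho_\al\in L^1(\R^+)$ built from the already-controlled lower-order terms, so that Gronwall's lemma closes the induction and yields the stated bound, uniformly in $\e$.
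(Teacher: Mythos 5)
Your proposal is correct and follows essentially the same route as the paper: an $L^2_v\dot H^s_h$ energy estimate with the transport term handled by the two--dimensional estimate\refeq{estimesiam}, the anisotropic pressure gradient reduced by integration by parts (using $\dive\wpe=0$) to a horizontal--gradient pairing, the crucial uniform--in--$\e$ bound on $\nabla_h(-\Delta_h-\e^2\p_3^2)^{-1}\dive_h$ obtained by writing the pressure source in horizontal divergence form, and a Gronwall argument with $L^1(\R^+)$ coefficients supplied by Lemma\refer{lemmaestimateprofil1} and Corollary\refer{corestimateprofil1}. The only (harmless) deviation is your choice of the divergence--form vector field $H^i=\wpe\cdot\nabla\vp^i$ in place of the paper's $N^h=\vp^h\cdot\nabla_h\wp^{\e,h}+\p_3(\wp^{\e,3}\vp^h)$; both have the same horizontal divergence and lead to the same product estimates.
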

\begin{proof}
We shall only sketch the proof, as it is very close to the proof of Lemma~\ref{lemmaestimateprofil1} which was 
carried out above. The only difference is that the horizontal divergence of~$\wp$ does not vanish identically, but that
will not change very much the estimates. We shall only write the proof in the case when~$\alpha = 0$ and~$-1<s<1$, and
leave the general case to the reader. Using Lemma~1.1 of~\cite{chemin10} we have, for any~$y_3$ in~$\R$,
$$
\left(\vp^h(t,\cdot,y_3)\cdot\nabla_h \wpe(t,\cdot,y_3)|\wpe(t,\cdot,y_3)\right)_{\dot H^s_h} \leq C
\|\nabla_h\vp (t,\cdot,y_3)\|_{L^2_h}\|\nabla_h\wpe (t,\cdot,y_3)\|_{\dot H^s_h}
\|\wpe (t,\cdot,y_3)\|_{\dot H^s_h}.
$$
Thus we get
$$
\longformule{
\frac12 \frac d{dt} \|\wpe(t)\|_{L^{2}_{v}\dot H^{s}_{h}}^{2} +  
\|\nabla_{h}\wpe(t)\|_{L^{2}_{v}\dot H^{s}_{h}}^{2} \leq \frac14
 \|\nabla_{h}\wpe(t)\|_{L^{2}_{v}\dot H^{s}_{h}}^{2}+ C  \|\nabla_{h}  \vp^h (t)\|_{L^{\infty}_{v}L^{2}_{h}}^{2}\|\wpe(t )\|_{L^{2}_{v}\dot H^{s}_{h}}^{2}
}{{}-  \e^{2}\int_{\R} (\partial_{3}\presspun(t,\cdot,y_3) | \underline w^{\e,3}(t,\cdot,y_3))_{\dot H^{s}_{h}} dy_3
 -\int_{\R}  (\nabla^h \presspun(t,\cdot,y_3) | \underline w^{\e,h}(t,\cdot,y_3))_{\dot H^{s}_{h}} dy_3 .}
$$
By integration by parts we have, thanks to the divergence free condition on~$\wpe$,
\beno
-\int_{\R}  (\partial_3 \presspun(t,\cdot,y_3) | \wp^{\e,3}(t,\cdot,y_3))_{\dot H^{s}_{h}} dy_3 & = &
\int_{\R}  ( \presspun(t,\cdot,y_3) | \partial_3\wp^{\e,3}(t,\cdot,y_3))_{\dot H^{s}_{h}} dy_3\\
 & = &-\int_{\R}  ( \presspun(t,\cdot,y_3) | \dive_h \wp^{\e,h}(t,\cdot,y_3))_{\dot H^{s}_{h}} dy_3.\eeno
By definition of the inner product of~$\dot H^s_h$, we get
$$
-\int_{\R}  (\partial_3 \presspun(t,\cdot,y_3) | \wp^{\e,3}(t,\cdot,y_3))_{\dot H^{s}_{h}} dy_3
=\int_{\R}  ( \nabla_h\presspun(t,\cdot,y_3) |\wp^{\e,h}(t,\cdot,y_3))_{\dot H^{s}_{h}} dy_3.
$$
Thus we have
$$
\longformule{
\frac12 \frac d{dt} \|\wpe(t)\|_{L^{2}_{v}\dot H^{s}_{h}}^{2} +  
\|\nabla_{h}\wpe(t)\|_{L^{2}_{v}\dot H^{s}_{h}}^{2} \leq \frac14
 \|\nabla_{h}\wpe(t)\|_{L^{2}_{v}\dot H^{s}_{h}}^{2}
}{{}+ C  \|\nabla_{h}  \vp^h (t)\|_{L^{\infty}_{v}L^{2}_{h}}^{2}\|\wpe(t )
\|_{L^{2}_{v}\dot H^{s}_{h}}^{2} - (1-\e^2)\int_{\R}  ( \nabla_h\presspun(t,\cdot,y_3) |\wp^{\e,h}(t,\cdot,y_3))_{\dot H^{s}_{h}} dy_3.
}
$$
Now we notice that
$$-(\e^2 \partial_{3}^{2} + \Delta_{h}) \presspun = \dive (\vp^h \cdot \nabla_{h}\wpe) = 
 \dive \sum_{j = 1}^{2} \partial_{j} (\vp^j  \wpe) ,
$$
which can be written in the simpler way
$$
-(\e^2 \partial_{3}^{2} + \Delta_{h}) \presspun = \dive_{h} N^{h}
$$
with~$N^h  = \vp^h \cdot \nabla_{h}\wp^{\e,h} + \partial_{3} (\wp^{\e,3}\vp^h)$.
It is easy to check that
  for any~$\sigma \in \R$,
\begin{equation}\label{estimatepresspunhsigma}
\|\nabla_{h} \presspun\|_{L^{2}_{v}\dot H^{\sigma}_{h}} \lesssim \|N^h\|_{L^{2}_{v}\dot H^{\sigma}_{h}},
\end{equation}
simply by noticing that
\begin{eqnarray*}
\|\nabla_{h} \presspun\|_{L^{2}_{v}\dot H^{\sigma}_{h}} ^{2} & \sim & \int |\xi_{h}|^{2\sigma+2} |\widehat
\presspun(\xi)|^{2} \: d\xi \\
&\sim  &\int |\xi_{h}|^{2\sigma+4} |\widehat N^{h}(\xi)|^{2} \: \frac{d\xi}{ (|\xi_{h}|^{2}+\e^2 |\xi_{3}|^{2})^2} \\
&\lesssim& \|N^{h}\|_{L^{2}_{v}\dot H^{\sigma}_{h}} ^{2}.
\end{eqnarray*}
We infer from~(\ref{estimatepresspunhsigma}) that
\begin{eqnarray*}
\|\nabla_{h} \presspun\|_{L^{2}_{v}\dot H^{\sigma}_{h}} 
&\leq &\|\vp^h \cdot \nabla_{h}\wp^{\e,h}\|_{L^{2}_{v}\dot H^{\sigma}_{h}} 
+ \|\partial_{3} (\wp^{\e,3}\vp^h)\|_{L^{2}_{v}\dot H^{\sigma}_{h}}  \\
&\leq & \|\vp^h \cdot \nabla_{h}\wpe\|_{L^{2}_{v}\dot H^{\sigma}_{h}} 
+  \|\wp^{\e,3}\partial_{3} \vp  \|_{L^{2}_{v}\dot H^{\sigma}_{h}} + \| \vp \dive_{h} \wp^{\e,h}
\|_{L^{2}_{v}\dot H^{\sigma}_{h}}.
\end{eqnarray*}
We claim that for all~$-1<s<1$,
\ben \label{claimpresspun}
\cI_h \eqdefa \int_{\R} |(\nabla_{h} \presspun(t,\cdot,y_3) | 
\wpe(t,\cdot,y_3))|_{\dot H^{s}_{h}} dy_3 &&\nonumber\\
&&\!\!\!\!\!\!\!\!\!\!\!\!\!\!\!\!\!\!\!\!\!\!\!\!\!\!\!\!\!\!\!\!\!\!\!\!\!\!\!\!\!\!\!\!\!\!\!\!\!\!\!\!\!\!\!\!\!\!\!\!\!\!\!\!\!\!\!\!\!\!\!\!{}\leq \frac14\|\nabla_{h}\wpe \|_{L^{2}_{v}\dot H^{s}_{h}}^{2} 
+ C \| \wpe \|_{L^{2}_{v}\dot H^{s}_{h}}^{2}   \|\nabla  \vp^h \|_{L^{\infty}_{v}L^{2}_{h}}^{2}( 1 + 
  \|  \vp^h \|_{L^{\infty}_{v}L^{2}_{h}}^{2}). 
\een
Let us prove the claim. Suppose first that~$s = 0$. Then a product law gives 
\begin{eqnarray*}
\cI_h& \leq&  \| \wpe \|_{L^{2}_{v}\dot H^{\frac12}_{h}}
 \|\nabla_{h} \presspun\|_{L^{2}_{v}\dot H^{-\frac12}_{h}}\\
&\lesssim &\| \wpe \|_{L^{2}_{v}\dot H^{\frac12}_{h}}  \|  \vp^h \|_{L^{\infty}_{v}\dot H^{\frac12}_{h}}   
\|\nabla_{h}  \wpe \|_{L^{2}(\R^{3})} + \| \wpe \|_{L^{2}_{v}\dot H^{\frac12}_{h}}^{2} 
\|\partial_{3} \vp \|_{L^{\infty}_{v}L^{2}_{h}} .
\end{eqnarray*}
By interpolation, we get
$$
%\longformule
{\cI_h
\leq \|\nabla_{h} \wpe\|_{L^{2}(\R^{3})}^{\frac32}
 \| \wpe\|_{L^{2}(\R^{3})}^{\frac12} \|  \vp^h \|_{L^{\infty}_{v}L^2_{h}}^{\frac12} 
\|  \nabla_{h}\vp^h \|_{L^{\infty}_{v}L^2_{h}}^{\frac12}}
{
{}+  \| \wpe\|_{L^{2}(\R^{3})}  \|\nabla_{h} \wpe\|_{L^{2}(\R^{3})} \|\partial_{3} \vp \|_{L^{\infty}_{v}L^{2}_{h}}.
}
$$
The claim in the case~$s = 0$ follows from a convexity  inequality, which gives
$$
\cI_h \leq \frac14\|\nabla_{h} \wpe\|_{L^{2}(\R^{3})}^{2}
+ C  \| \wpe\|_{L^{2}(\R^{3})}^2(1+\| \vp \|_{L^{\infty}_{v}L^{2}_{h}}^2)   
\|\nabla \vp \|^2_{L^{\infty}_{v}L^{2}_{h}}.
$$
In the case when~Ê$0<s<1$, then we use the  product rule
$$
\|\nabla_{h} \presspun\|_{L^{2}_{v}\dot H^{s-1}_{h}}  \lesssim \| \vp^h \|_{L^{\infty}_{v}\dot H^{\frac12}_{h}}   
\|\nabla_{h}\wpe \|_{L^{2}_{v}\dot H^{s-\frac12}_{h}} 
 + \|\wpe\|_{L^{2}_{v}\dot H^{s}_{h}}  \|\nabla \vp \|_{L^{\infty}_{v}L^{2}_{h}}
$$
along with the fact that
$$
\cI_h \leq \|\nabla_{h} \presspun\|_{L^{2}_{v}\dot H^{s-1}_{h}} 
\|\nabla_{h}\wpe \|_{L^{2}_{v}\dot H^{s}_{h}}.
$$
Finally in the case when~Ê$-1<s<0$, we write
 \begin{eqnarray*}
\cI_h & \leq& 
\| \wpe\cdot\nabla    \vp^h\|_{L^{2}_{v}\dot H^{s}_{h}} \| \wpe \|_{L^{2}_{v}\dot H^{s}_{h}}
+   \| \wpe \cdot   \vp^h \|_{L^{2}_{v}\dot H^{s}_{h}}
 \|\nabla_{h} \wpe\|_{L^{2}_{v}\dot H^{s }_{h}}\\
&\lesssim & \| \nabla \vp^h \|_{L^{\infty}_{v}L^{2}_{h}} \| \wpe \|_{L^{2}_{v}\dot H^{s}_{h}}
\|  \nabla_{h}\wpe \|_{L^{2}_{v}
\dot H^{s}_{h}} +  \|  \vp^h \|_{L^{\infty}_{v}\dot H^{\frac12}_{h}}  \| \wpe \|_{L^{2}_{v}\dot H^{s+\frac12}_{h}} 
\|\nabla_{h} \wpe\|_{L^{2}_{v}\dot H^{s }_{h}}.
\end{eqnarray*}
The claim~(\ref{claimpresspun}) follows by interpolation. 

Using that result we obtain that
$$ 
 \frac d{dt} \|\wpe(t )\|_{L^{2}_{v}\dot H^{s}_{h}}^{2} +  
\|\nabla_{h}\wpe(t )\|_{L^{2}_{v}\dot H^{s}_{h}}^{2} \lesssim 
  \|\nabla \vp^h (t )\|_{L^{\infty}_{v}L^{2}_{h}}^{2}\|\wpe(t)\|_{L^{2}_{v}\dot H^{s}_{h}}^{2} 
(1+  \|   \vp^h (t )\|_{L^{\infty}_{v}L^{2}_{h}}^{2})
$$
and we conclude by a Gronwall lemma. Indeed we get that
$$ \longformule{
 \|\wpe(t )\|_{L^{2}_{v}\dot H^{s}_{h}}^{2} + \int_{0}^t \|
\nabla_{h}\wpe(t' )\|_{L^{2}_{v}\dot H^{s}_{h}}^{2}\: dt' \leq 
  \|w_{0}\|_{L^{2}_{v}\dot H^{s}_{h}}^{2} }{\times \exp\left(
C \int_{0}^t  \|\nabla \vp^h (t' )\|_{L^{\infty}_{v}L^{2}_{h}}^{2} (1+ 
\| \vp^h (t' )\|_{L^{\infty}_{v}L^{2}_{h}}^{2}) \: dt' 
\right).}
$$
But by the basic energy estimate~(\ref{estimenergy2Dparameter}), we have
$$
\| \vp^h (t )\|_{L^{\infty}_{v}L^{2}_{h}} \leq \|v_{0}\|_{L^{\infty}_{v}L^{2}_{h}}.
$$
Moreover, Corollary~\ref{corestimateprofil1} implies that
$$
\nabla  \vp^h \in L^{2}(\R^+;L^{\infty}_{v}L^{2}_{h}),
$$
so Lemma~\ref{lemmaestimateprofil2} is proved in the case when~Ê$\alpha = 0$. The case when~$\alpha$ is
positive is an easy adaptation of the proof of Lemma~\ref{lemmaestimateprofil1}; it is left
to the reader.
\end{proof}
Clearly Lemmas~\ref{lemmaestimateprofil1} and~\ref{lemmaestimateprofil2} allow to
 obtain Lemma~\ref{lemmaquasi2D1} stated in the previous section.

%%%%%%%%%%%%%%%%%%%%%%%%%%%%%%%%%%%%%%%%%%%%%%%%%%%%%%%%%%%%%%%

\section{The estimate of the error term}\label{estimateserror}\setcounter{equation}{0}
In this section we shall prove Lemma  \ref{lemmaquasi2D2} stated above. We need to write down precisely
 the equation satisfied by the remainder term~$R^{\e}$, and to check that the forcing terms appearing in the equation
can be made small.

Let us recall that
\beno \vapp (t,x) & = &  (( \vp^h, 0 )+\e (\wp^{\e,h},\e^{-1}\wp^{\e,3}
 ))(t,x_h,\e x_3)\and\\
  p^\e_{app}(t,x) & =  & (\presspO + \e   \presspun) (t,x_h,\e x_3).
\eeno
 It is an easy computation to see that
\begin{eqnarray*}
&&\left(\partial_{t} \vapp + \vapp \cdot \nabla \vapp - \Delta \vapp\right)(t,x_h,  x_3) = 
(\partial_{t} \vp^h+ \vp^h\cdot \nabla_{h} \vp^h - \Delta_{h} \vp^h, 0)
 (t,x_h,\e x_3) \\
&+& \e\left(\partial_{t} \wp^{\e,h} +\vp^h\cdot \nabla_{h} \wp^{\e,h} - \Delta_{h}  \wp^{\e,h}
-\e^{2} \partial_{3}^{2} \wp^{\e,h},0 \right)(t,x_h,  \e x_3) \\
&+& \left(0,\partial_{t} \wp^{\e,3} +\vp^h\cdot \nabla_{h} \wp^{\e,3} - \Delta_{h}  \wp^{\e,3}
-\e^{2} \partial_{3}^{2} \wp^{\e,3}  \right)(t,x_h,  \e x_3)+ \e \widetilde F^{\e}(t,x_h,\e x_3)
\end{eqnarray*}
where
$$ 
\widetilde  F^{\e}(t,x_h,  y_3) 
 \eqdefa \Bigl((\e\wp^{\e } \cdot \nabla\wp^{\e,h},  \wp^{\e } \cdot \nabla\wp^{\e,3})
+ (\wp^{\e} \cdot \nabla \vp^h,0)
 + \e(\partial_{3}^{2}
\vp^h,0)\Bigr)(t,x_h,  y_3) . 
$$
In order to simplify the notation, let us write~$\widetilde  F^{\e} = \widetilde  F^{\e,1}+\widetilde  F^{\e,2}$ with
\begin{eqnarray*}
\widetilde  F^{\e,1}
 &\eqdefa&
(\e\wp^{\e } \cdot \nabla\wp^{\e,h},  \wp^{\e } \cdot \nabla\wp^{\e,3})+  (\wp^{\e} \cdot \nabla \vp^h,0) \and
 \\
  \widetilde  F^{\e,2} & \eqdefa& \e(\partial_{3}^{2}
\vp^h,0) .
\end{eqnarray*}
Recalling the equations satisfied by~$\vp^h$ and~$\wp^{\e}$, we infer that
$$
\left(\partial_{t} \vapp + \vapp \cdot \nabla \vapp - \Delta \vapp\right)(t,x_h,  x_3) = -\nabla p^\e_{app} + \e G^{\e}(t,x_h,  \e x_3)
$$
with $ \ds G^{\e}(t,x_h,  y_3)  \eqdefa  \left(\widetilde  F^{\e} 
 + (0,\partial_{3} \presspO  ) \right)(t,x_h,  y_3)$ and $F^{\e} (t,x_h,  x_3) \eqdefa 
\e G^{\e} (t,x_h,\e x_3)$. Denoting~$q^\e =p^\e - p^\e_{app}$, we infer that
$$
\partial_{t} R^{\e} +  R^{\e} \cdot \nabla R^{\e} + \vapp  \cdot \nabla R^{\e}+
 R^{\e} \cdot \nabla \vapp - \Delta R^{\e} = -\nabla q^\e + F^{\e}.
$$
So Lemma~\ref{lemmaquasi2D2} will be established as soon as we prove that~$\|F^\e
\|_{L^2(\R^+;\dot H^{-\frac 1 2}(\R^3))}
 \leq C_{v_0,w_0} \e^{\frac 1 3}$.

The forcing term~$F^{\e}$ consists in three different types of terms: a pressure term  involving~$\presspO$,
a linear term~$ \e^2 \partial_{3}^{2}
\vp^h (t,x_h,\e x_3)$, and finally a number of nonlinear terms, defined as~$\e \widetilde  F^{\e,1}  (t,x_h,\e x_3)$ above.
 Each of these contributions will be dealt with separately. Let
us start by the pressure term.

\begin{lemma}\label{pressureterms}
{\sl The following estimate holds:
$$
\e \|(\partial_{3} \presspO) (t,x_{h},\e x_{3})\|_{L^2(\R^+;\dot H^{-\frac 1 2}(\R^3))}  \leq  C_{v_{0},w_{0}} 
\e^{\frac13}.
$$
}
\end{lemma}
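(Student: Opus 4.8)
The plan is to combine an anisotropic scaling identity with the elliptic estimate for $\presspO$ coming from $(NS2D_3)$, and then to close using the a priori bounds of Lemma~\ref{lemmaestimateprofil1} and Corollary~\ref{corestimateprofil1}. First I would record the effect of the vertical dilation on the $\dot H^{-\frac12}(\R^3)$ norm. For a function $g=g(x_h,y_3)$, setting $f(x_h,x_3)\eqdefa g(x_h,\e x_3)$, a change of variables in the Fourier transform gives $\widehat f(\xi_h,\xi_3)=\e^{-1}\widehat g(\xi_h,\xi_3/\e)$, whence, after substituting $\eta_3=\xi_3/\e$,
\beno
\|f\|_{\dot H^{-\frac12}(\R^3)}^2 &=& \frac1\e\int_{\R^3}\bigl(|\xi_h|^2+\e^2\eta_3^2\bigr)^{-\frac12}\,|\widehat g(\xi_h,\eta_3)|^2\,d\xi_h\,d\eta_3\\
&\leq& \frac1\e\,\|g\|_{L^2_v\dot H^{-\frac12}_h}^2,
\eeno
using $(|\xi_h|^2+\e^2\eta_3^2)^{-\frac12}\leq|\xi_h|^{-1}$ together with Plancherel in the vertical variable. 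Applied with $g=\partial_3\presspO$ and integrated in time, this reduces the lemma to the bound $\|\partial_3\presspO\|_{L^2(\R^+;L^2_v\dot H^{-\frac12}_h)}\leq C_{v_0}$, since then the left-hand side is at most $\e\cdot\e^{-\frac12}C_{v_0}=\e^{\frac12}C_{v_0}\leq C_{v_0,w_0}\e^{\frac13}$ for $\e<1$.

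Next I would estimate $\partial_3\presspO$ horizontally. From $(NS2D_3)$ the pressure solves $-\Delta_h\presspO=\sum_{1\le j,k\le2}\partial_j\partial_k(\vp^j\vp^k)$, so $\presspO$ is a sum of double Riesz transforms of $\vp^h\otimes\vp^h$. Differentiating in the parameter $y_3$ and using that the multipliers $\xi_j\xi_k/|\xi_h|^2$ are bounded on $\dot H^{-\frac12}_h$, I get, for each fixed $(t,y_3)$,
$$
\|\partial_3\presspO\|_{\dot H^{-\frac12}_h}\lesssim\|\partial_3(\vp^h\otimes\vp^h)\|_{\dot H^{-\frac12}_h}.
$$
Expanding $\partial_3(\vp^h\otimes\vp^h)=\partial_3\vp^h\otimes\vp^h+\vp^h\otimes\partial_3\vp^h$ and invoking the law of product $\dot H^{\frac12}_h\cdot L^2_h\hookrightarrow\dot H^{-\frac12}_h$ in $\R^2$ (which follows from $\dot H^{\frac12}(\R^2)\hookrightarrow L^4$), this yields the pointwise bound $\|\partial_3\presspO(t,\cdot,y_3)\|_{\dot H^{-\frac12}_h}\lesssim\|\partial_3\vp^h(t,\cdot,y_3)\|_{L^2_h}\,\|\vp^h(t,\cdot,y_3)\|_{\dot H^{\frac12}_h}$.

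It then remains to integrate in $y_3$ and in time. Lemma~\ref{lemmaestimateprofil1} applied with $s=\frac12$ and $\alpha=0$ provides $\|\vp^h(t,\cdot,y_3)\|_{\dot H^{\frac12}_h}^2\leq C_{v_0}(y_3)$, uniformly in $t$, with $C_{v_0}\in L^\infty(\R)$; placing this factor in $L^\infty$ I obtain
\beno
\|\partial_3\presspO\|_{L^2(\R^+;L^2_v\dot H^{-\frac12}_h)}^2
&\lesssim& \int_0^\infty\!\!\int_\R\|\partial_3\vp^h\|_{L^2_h}^2\,\|\vp^h\|_{\dot H^{\frac12}_h}^2\,dy_3\,dt\\
&\leq& \|C_{v_0}\|_{L^\infty}\,\|\partial_3\vp^h\|_{L^2(\R^+\times\R^3)}^2.
\eeno
Since the $\dot H^1(\R^3)$ estimate of Corollary~\ref{corestimateprofil1} controls $\partial_3\vp^h$ in $L^2(\R^+;L^2(\R^3))$, the right-hand side is bounded by $C_{v_0}$, which closes the argument.

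The step I expect to require the most care is the scaling identity: one must verify that the anisotropic weight creates no low-frequency obstruction, which is precisely where the inequality $-\frac12>-\frac d2=-1$ (in $d=2$) guarantees that $\|\partial_3\presspO\|_{\dot H^{-\frac12}_h}$ is finite, and one must keep track of the factor $\e^{-\frac12}$ so that the prefactor $\e$ in the statement yields the gain $\e^{\frac12}\leq\e^{\frac13}$.
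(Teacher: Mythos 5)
Your proof is correct, but it follows a genuinely different route from the paper's. The paper passes through the Sobolev embedding $L^{\frac32}(\R^3)\hookrightarrow \dot H^{-\frac12}(\R^3)$, uses the Lebesgue-space scaling $\|g(\cdot_h,\e\,\cdot_3)\|_{L^{3/2}}=\e^{-\frac23}\|g\|_{L^{3/2}}$ (whence the exponent $\frac13$ in the statement), and then controls $\partial_3\presspO$ in $L^2_t L^{3/2}_x$ by H\"older and three-dimensional Sobolev embeddings. You instead compute the effect of the vertical dilation directly on the anisotropic Fourier side, discarding the $\e^2\eta_3^2$ contribution in the weight to land in $L^2_v\dot H^{-1/2}_h$ at the cost of only $\e^{-\frac12}$, and you then estimate $\partial_3\presspO$ horizontally via the boundedness of the double Riesz transforms on $\dot H^{-1/2}_h$ and the two-dimensional product law $\dot H^{1/2}_h\cdot L^2_h\hookrightarrow\dot H^{-1/2}_h$, closing with Lemma~\ref{lemmaestimateprofil1} (for $\sup_{t,y_3}\|\vp^h\|_{\dot H^{1/2}_h}$) and Corollary~\ref{corestimateprofil1} (for $\partial_3\vp^h\in L^2(\R^+;L^2(\R^3))$). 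All the ingredients you invoke are available, the low-frequency issue is correctly dispatched since $|{-\tfrac12}|<1$, and your bookkeeping of powers of $\e$ is right; in fact your argument yields the stronger rate $\e^{\frac12}$ for this term (matching what the paper obtains for the $\e^2\partial_3^2\vp^h$ term in Lemma~\ref{d32} by essentially the dual of your scaling computation), whereas the paper's Lebesgue-space detour only gives $\e^{\frac13}$. Since the lemma as stated only asks for $\e^{\frac13}$, both arguments prove it; yours is sharper, the paper's is slightly more elementary in that it avoids the anisotropic Sobolev scaling identity and the $\dot H^{-1/2}_h$ product law.
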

\begin{proof}
  We define~$P_{0}^{\e} (t,x_{h},  x_{3})\eqdefa (\partial_{3} \presspO) 
 (t,x_{h},\e x_{3})$. 
Sobolev embeddings enable us to write
\begin{eqnarray*}
\|P_{0}^{\e}\|_{L^2(\R^+;\dot H^{-\frac 1 2}(\R^3))} &\lesssim& 
\|P_{0}^{\e}\|_{L^2(\R^+;L^{ \frac 3 2}(\R^3))} \\
&\lesssim& \e^{-\frac23} \|\partial_{3}\presspO\|_{L^2(\R^+;L^{ \frac 3 2}(\R^3))}.
\end{eqnarray*}
Recalling that
$$
\presspO = (-\Delta_{h})^{-1} \sum_{j,k=1}^{2}\partial_{j} \partial_{k} (\vp^j \vp^k),
$$
we have  by Sobolev embeddings,
\begin{eqnarray*}
  \|\partial_{3} \presspO\|_{L^2(\R^+;L^{ \frac 3 2}(\R^3))} &\lesssim& 
\sum_{j,k=1}^{2}\|\vp^j \partial_{3}
 \vp^k\|_{L^2(\R^+;L^{ \frac 3 2}(\R^3))} \\
 &\lesssim&  \|\vp \|_{L^\infty(\R^+;L^{3}(\R^3))} 
 \|\partial_{3}\vp \|_{L^2(\R^+;L^{3}(\R^3))} \\
 &\lesssim&  \|\vp \|_{L^2(\R^+;H^{\frac12}(\R^3))} 
 \|\partial_{3}\vp \|_{L^\infty(\R^+;H^{\frac12}(\R^3))}, 
\end{eqnarray*}
so we can conclude by Lemma~\ref{lemmaestimateprofil1}. This proves
Lemma~\ref{pressureterms}.
\end{proof}

Now let us consider the linear term~$ \e^2 \partial_{3}^{2}
\vp^h (t,x_h,\e x_3)$. The statement is the following.
\begin{lemma}\label{d32}
{\sl The following estimate holds:
$$
\e^{2} \| (\partial_{3}^2 \vp^h) (t,x_{h},\e x_{3})\|_{L^2(\R^+;\dot H^{-\frac 1 2}(\R^3))} \leq C_{v_{0}
} \e^{\frac12}.
$$
}
\end{lemma}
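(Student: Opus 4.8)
The plan is to follow the philosophy of Lemma~\ref{pressureterms}---reduce the estimate, via the vertical rescaling $x_3\mapsto\e x_3$, to an $\e$-independent Sobolev norm of $\vp^h$ controlled by Corollary~\ref{corestimateprofil1}---but to carry it out on the Fourier side rather than through the embedding $L^{3/2}\hookrightarrow\dot H^{-\frac12}$. The reason is that the offending term is \emph{linear}: it has none of the product structure that let the pressure be placed in $L^{3/2}$ by H\"older, and in fact $\partial_3^2\vp^h$ is \emph{not} controlled in $L^2(\R^+;L^{3/2}(\R^3))$, since at low horizontal frequency there is no $L^2$-in-time information (the dissipation only provides time integrability in $\dot H^\sigma_h$ for $\sigma>0$).

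Set $Q^\e(t,x)\eqdef(\partial_3^2\vp^h)(t,x_h,\e x_3)$, so that $\widehat{Q^\e}(\xi_h,\xi_3)=\e^{-1}\widehat{\partial_3^2\vp^h}(\xi_h,\xi_3/\e)$. First I would record the exact scaling identity obtained by the change of variables $\eta_3=\xi_3/\e$ in the $\dot H^{-\frac12}(\R^3)$ norm, using $\widehat{\partial_3^2\vp^h}=-\eta_3^2\,\widehat{\vp^h}$:
$$
\e^4\,\|Q^\e(t)\|_{\dot H^{-\frac12}(\R^3)}^2
=\e^3\int_{\R^3}\frac{\eta_3^4\,|\widehat{\vp^h}(t,\xi_h,\eta_3)|^2}{(|\xi_h|^2+\e^2\eta_3^2)^{\frac12}}\,d\xi_h\,d\eta_3 .
$$
The whole point is the anisotropic denominator $(|\xi_h|^2+\e^2\eta_3^2)^{\frac12}$.

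The key step is to split the frequency integral into the regions $A=\{|\xi_h|\le\e|\eta_3|\}$ and $B=\{|\xi_h|\ge\e|\eta_3|\}$, on which the denominator is comparable to $\e|\eta_3|$ and to $|\xi_h|$ respectively. On $A$ one has $|\eta_3|\sim|\xi|$, so the contribution is $\lesssim\e^2\int_A|\xi|^3|\widehat{\vp^h}|^2$; integrating in time and using the \emph{isotropic} bound $\|\vp^h\|_{L^2(\R^+;\dot H^{\frac32}(\R^3))}\le C_{v_0}$ of Corollary~\ref{corestimateprofil1} gives $\e^2\|Q^\e\|_{L^2(\R^+;\dot H^{-\frac12})}\lesssim\e\,C_{v_0}$ from this region. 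On $B$ I would trade two vertical derivatives for one horizontal derivative and a power of $\e$, via $\eta_3^2\le\e^{-2}|\xi_h|^2$; this turns the contribution into $\lesssim\e\int_B|\xi_h|\,\eta_3^2|\widehat{\vp^h}|^2$, controlled after time integration by the \emph{anisotropic} bound $\|\partial_3\vp^h\|_{L^2(\R^+;L^2_v\dot H^{\frac12}_h)}\le C_{v_0}$, which is~(\ref{corestimateprofil1eq1}) with $\alpha=1$, $\sigma=\frac12$. This region yields only $\e^2\|Q^\e\|_{L^2(\R^+;\dot H^{-\frac12})}\lesssim\e^{\frac12}C_{v_0}$, and is therefore the one fixing the final power.

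The main obstacle is exactly the absence of $L^2$-in-time control at low horizontal frequency noted above: the naive scaling would suggest a gain as strong as $\e^{\frac43}$, but $\partial_3^2\vp^h$ simply does not live in the corresponding $\e$-independent space. The frequency splitting circumvents this, by routing the low-horizontal-frequency part through the isotropic $\dot H^{\frac32}(\R^3)$ estimate (where vertical and horizontal regularity are interchangeable) and the high-horizontal-frequency part through the anisotropic estimate at the cost of one factor $\e^{-1}$. The worse of the two exponents, coming from region $B$, is precisely $\e^{\frac12}$, so the bound is sharp for this method.
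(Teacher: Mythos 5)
Your argument is correct and is in essence the paper's own: both proofs are Fourier-side computations exploiting the vertical rescaling, which split the frequency contribution into a vertical-dominant part (costing only $\e$) and a horizontal-dominant part (costing $\e^{\frac12}$, which fixes the exponent), the latter controlled by the anisotropic bounds of Section~\ref{estimatesvapp}. The only difference is cosmetic: the paper first writes $\e^2(\partial_3^2\vp^h)(t,x_h,\e x_3)=\e\,\partial_3\bigl[(\partial_3\vp^h)(t,x_h,\e x_3)\bigr]$, reduces to the $\dot H^{\frac12}$ norm of the rescaled $\partial_3\vp^h$ and splits the numerator via $|\xi|\le|\xi_h|+|\xi_3|$, whereas you keep the negative-order norm and split the integration domain according to which term dominates $(|\xi_h|^2+\e^2\eta_3^2)^{\frac12}$.
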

\begin{proof}
We have
\begin{eqnarray*}
\e^{2} \| (\partial_{3}^2 \vp^h) (t,x_{h},\e x_{3}) \|_{L^2(\R^+;\dot H^{-\frac 1 2}(\R^3))} 
&\lesssim&
\e \Bigl\|\partial_{3} \bigl (\partial_{3}  \vp^h  (t,x_{h},\e x_{3})\bigr) \Bigr\|_{L^2(\R^+;\dot H^{-\frac 1 2}(\R^3))} 
\\
&\lesssim&
\e \|(\partial_{3} \vp^h) (t,x_{h},x_{3}) \|_{L^2(\R^+;\dot H^{ \frac 1 2}(\R^3))} ,
\end{eqnarray*} 
A computation in Fourier variables  shows that, for any function~$a$ on~$\R^3$, we have
\begin{eqnarray*}
\|a(x_{h},\e x_{3}) \|_{ \dot H^{ \frac 1 2}(\R^3) } ^{2} & = &
\frac 1 {\e^{2}} \int_{\R^{3}} \Bigl |\widehat a
\Bigl(\xi_{h},\frac{\xi_{3}} \e \Bigr) \Bigr|^2 |\xi| \: d\xi \\
&\leq &\frac 1 {\e} \int_{\R^{3}} \Bigl |\widehat a
\Bigl(\xi_{h},\frac{\xi_{3}} \e \Bigr) \Bigr|^2 |\xi_h| \: d\xi_h \frac {d\xi_3} \e 
+ \int_{\R^{3}} \Bigl |\widehat a
\Bigl(\xi_{h},\frac{\xi_{3}} \e \Bigr) \Bigr|^2 \frac {|\xi_3|} \e \: d\xi_h  \frac {d\xi_3} \e \,\cdotp
\end{eqnarray*}
By interpolation, we deduce that
$$
\|a(x_{h},\e x_{3}) \|_{ \dot H^{ \frac 1 2}(\R^3) } ^{2}  \leq 
\frac1\e \|a\|_{L^{2}(\R^3)} \|\nabla_h a\|_{L^{2}(\R^3)} +
 \|a\|_{L^{2}(\R^3)}\|\partial_{3}a\|_{L^{2}(\R^3)}.
$$
 Applying this inequality with~$a=\partial_3\vp$,  we get
\begin{eqnarray*}
\e^{2} \| (\partial_{3}^2 \vp^h) (t,x_{h},\e x_{3})\|_{L^2(\R^+;\dot H^{-\frac 1 2}(\R^3))} 
&\lesssim & \e^{\frac12}\|\partial_{3} \vp (t)\|_{L^{2}(\R^{+};L^{2}(\R^3))}^{\frac12}\|\partial_{3} \nabla_{h} \vp (t)\|^{\frac 12}_{L^{2}(\R^{+};L^{2}(\R^3))}\\
&&\quad{}
+ \e \|\partial_{3}  \vp (t)\|_{L^{2}(\R^{+};L^{2}(\R^3))}^{\frac12} \|\partial_{3}^{2}
  \vp (t)\|_{L^{2}(\R^{+};L^{2}(\R^3))}^{\frac12} \\
&\leq & C_{v_{0}} \e^{\frac12}
\end{eqnarray*}
by
  Lemma~\ref{lemmaestimateprofil1}. This proves Lemma~\ref{d32}.
\end{proof}
Now let us turn to the nonlinear terms composing~$F^{\e}$, which we denoted above~$\e \widetilde  F^{\e,1}$.

\begin{lemma}\label{nonlinear}
{\sl The following estimate holds:
$$
\e  \|
\widetilde  F^{\e,1} (t,x_h,\e x_3)\|_{L^2(\R^+;\dot H^{-\frac 1 2}(\R^3))} \leq C_{v_{0},w_{0}
} \e^{\frac13}.
$$
}
\end{lemma}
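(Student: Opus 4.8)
The plan is to reduce the $\dot H^{-\frac12}$ estimate to an $L^{\frac32}$ estimate, exactly as in the proof of Lemma~\ref{pressureterms}. By the Sobolev embedding $L^{\frac32}(\R^3)\hookrightarrow\dot H^{-\frac12}(\R^3)$ together with the scaling identity $\|a(\cdot_h,\e\cdot_3)\|_{L^{\frac32}(\R^3)}=\e^{-\frac23}\|a\|_{L^{\frac32}(\R^3)}$, one obtains
$$
\e\|\widetilde F^{\e,1}(t,x_h,\e x_3)\|_{L^2(\R^+;\dot H^{-\frac12}(\R^3))}\lesssim\e^{\frac13}\|\widetilde F^{\e,1}\|_{L^2(\R^+;L^{\frac32}(\R^3))},
$$
so the lemma reduces to the \emph{scale-invariant} bound $\|\widetilde F^{\e,1}\|_{L^2(\R^+;L^{\frac32}(\R^3))}\leq C_{v_0,w_0}$, uniformly in $\e$. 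Observe that this is precisely what produces the announced power $\e^{\frac13}$.

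To estimate the $L^2$-in-time, $L^{\frac32}$-in-space norm I would systematically pair a factor controlled in $L^\infty_t$ with a gradient controlled in $L^2_t$, via the Hölder inequality $\|fg\|_{L^{\frac32}(\R^3)}\leq\|f\|_{L^6(\R^3)}\|g\|_{L^2(\R^3)}$. The $L^6$ factor is furnished by Sobolev: from Lemma~\ref{lemmaestimateprofil2} (with $\al=0,\ s=1$ and with $\al=(0,0,1),\ s=0$) one has $\wp^\e\in L^\infty(\R^+;\dot H^1(\R^3))\hookrightarrow L^\infty(\R^+;L^6(\R^3))$, with norm bounded by $C_{v_0,w_0}$. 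Expanding $\wp^\e\cdot\nabla=\wp^{\e,h}\cdot\nabla_h+\wp^{\e,3}\partial_3$, the terms carrying only horizontal derivatives, namely $\e\,\wp^{\e,h}\cdot\nabla_h\wp^{\e,h}$, $\wp^{\e,h}\cdot\nabla_h\vp^h$ and $\wp^{\e,h}\cdot\nabla_h\wp^{\e,3}$, are then immediate: the gradient factors $\nabla_h\wp^\e$ (Lemma~\ref{lemmaestimateprofil2}) and $\nabla_h\vp^h$ (the energy identity \refeq{estimenergy2Dparameter} integrated in $y_3$) both lie in $L^2(\R^+;L^2(\R^3))$ with the correct constants.

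The genuine difficulty lies in the terms carrying a \emph{vertical} derivative, because the only vertical dissipation in $(T^\e_\vp)$ is the weak term $\e^2\partial_3^2\wp^\e$, so that $\partial_3\wp^\e$ is a priori controlled only in $L^\infty_t$, not in $L^2_t$. I would dispose of these by three distinct devices. For $\wp^{\e,3}\partial_3\wp^{\e,3}$ (from $\wp^\e\cdot\nabla\wp^{\e,3}$) I use the divergence-free condition $\partial_3\wp^{\e,3}=-\dive_h\wp^{\e,h}$ to convert the vertical derivative into a horizontal one, reducing it to the previous case. For $\e\,\wp^{\e,3}\partial_3\wp^{\e,h}$ I exploit the \emph{extra} factor $\e$ present in the definition of $\widetilde F^{\e,1}$: writing it as $\wp^{\e,3}(\e\partial_3\wp^{\e,h})$ and using that the energy estimate underlying Lemma~\ref{lemmaestimateprofil2}, whose dissipation also contains $\e^2\|\partial_3\wp^\e\|^2$, precisely controls $\e\partial_3\wp^\e$ in $L^2(\R^+;L^2(\R^3))$, this term is again of the form $L^\infty_tL^6\times L^2_tL^2$. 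Finally the term $\wp^{\e,3}\partial_3\vp^h$ is handled because $\vp$, unlike $\wp^\e$, enjoys full parabolic smoothing in all directions: Corollary~\ref{corestimateprofil1}, applied to $\partial_3\vp$ (which solves a heat-type equation with data $\partial_3\vp_0\in L^2_v\dot H^{-1}_h$, by the hypothesis on $v_0^h$ in Theorem~\ref{theoquasi2D}), gives $\partial_3\vp^h\in L^2(\R^+;L^2(\R^3))$, so $\|\wp^{\e,3}\partial_3\vp^h\|_{L^{\frac32}}\leq\|\wp^{\e,3}\|_{L^6}\|\partial_3\vp^h\|_{L^2}$ is again in $L^2_t$.

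Collecting the six contributions yields $\|\widetilde F^{\e,1}\|_{L^2(\R^+;L^{\frac32}(\R^3))}\leq C_{v_0,w_0}$, whence the lemma. I expect the main obstacle to be exactly the vertical-derivative terms, resolved by the combination of the divergence structure, the gained $\e$ weight used against the $\e^2\partial_3^2$ dissipation, and the stronger isotropic estimates available for $\vp$ but not for $\wp^\e$.
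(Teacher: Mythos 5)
Your proposal is correct and follows the same skeleton as the paper's proof: the embedding $L^{\frac32}(\R^3)\hookrightarrow\dot H^{-\frac12}(\R^3)$ combined with the vertical rescaling, which costs exactly $\e^{-\frac23}$ and leaves the announced $\e^{\frac13}$, followed by product estimates fed by the bounds of Section~\ref{estimatesvapp}. The implementation differs: the paper uses the H\"older pairing $L^{3}\times L^{3}\to L^{\frac32}$ with both factors measured in $\dot H^{\frac12}(\R^3)$ (one in $L^\infty_t$, one in $L^2_t$) and then asserts in one line that the needed norms of $\vp$ and $\wpe$ are controlled, whereas you use $L^{6}\times L^{2}\to L^{\frac32}$ with the $L^6$ factor supplied by $\wpe\in L^\infty(\R^+;\dot H^1(\R^3))$. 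The genuinely valuable difference is that you isolate the three terms carrying a vertical derivative --- exactly where the anisotropy of Lemmas~\ref{lemmaestimateprofil1} and~\ref{lemmaestimateprofil2} (whose $L^2$-in-time bounds always come with a horizontal gradient) makes the paper's terse conclusion non-obvious --- and dispose of them by the divergence-free identity $\partial_3\wp^{\e,3}=-\dive_h\wp^{\e,h}$, by playing the extra $\e$ against the $\e^2\partial_3^2$ dissipation, and by the isotropic smoothing of $\vp$ tied to the hypothesis $\partial^\al v_0^h\in L^2_v\dot H^{-1}_h$. Two caveats, neither of which is a gap but both of which should be recorded: the uniform bound $\e\|\partial_3\wpe\|_{L^2(\R^+;L^2)}\leq C_{v_0,w_0}$ is not part of the \emph{statement} of Lemma~\ref{lemmaestimateprofil2}, so you must go back into its proof and retain the positive term $\e^2\|\partial_3\wpe\|^2_{L^2_v\dot H^s_h}$ that is dropped there from the left-hand side; and the first inequality of Corollary~\ref{corestimateprofil1} is stated only for $\al=0$, so its extension to $\partial_3\vp$ (via the same Duhamel argument with data $\partial_3 v_0^h\in L^2_v\dot H^{-1}_h$) needs to be spelled out, since the $\s=0$ endpoint is genuinely unreachable from Lemma~\ref{lemmaestimateprofil1} alone; alternatively you could pair $\wp^{\e,3}\in L^2(\R^+;L^6)$ (which follows from the divergence-free condition) with $\partial_3\vp^h\in L^\infty(\R^+;L^2(\R^3))$ and avoid that endpoint altogether.
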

\begin{proof} We recall that
$$
\widetilde  F^{\e,1}  =(\e\wp^{\e } \cdot \nabla\wp^{\e,h},  \wp^{\e } \cdot \nabla
\wp^{\e,3})+  (\wp^{\e} \cdot \nabla \vp^h,0).
$$
Notice that for all functions~$a$ and~$b$ and any~$1 \leq j \leq 3$,
\begin{eqnarray*}
\| a \partial_{j} b \|_{L^2(\R^+;\dot H^{-\frac 1 2}(\R^3))} &\lesssim &
\| a \partial_{j} b \|_{L^2(\R^+;L^{\frac 3 2}(\R^3))} 
\\
&\lesssim &\|a  \|_{L^\infty(\R^+;L^{3}(\R^3))}\| \partial_{j}
 b\|_{L^2(\R^+; L^{3}(\R^3))}.
\end{eqnarray*}
Defining~$c^{\e}(t,x_{h},x_{3})  = (a \partial_{j}b)(t,x_h,\e x_3)$ this implies
that
$$
\| c^\e \|_{L^2(\R^+;\dot H^{-\frac 1 2}(\R^3))} \lesssim \e^{-\frac23}
\|a  \|_{L^\infty(\R^+;\dot H^{ \frac 1 2}(\R^3))}\| \partial_{j}
 b\|_{L^2(\R^+;\dot H^{ \frac 1 2}(\R^3))}.
$$
We can apply that inequality to~$a$ and~$b$ equal to~$\vp $ or~$\wpe$, due to the results proved
 in Section~\ref{estimatesvapp}, and the lemma follows.
\end{proof}
%%%%%%%%%%%%%%%%%%%%%%%%%%%%%%%%%%%%%%%%%%%%%%%%%%%%%%%%%%%%%%%
%\section{Some concluding remarks}\label{conclusion}\setcounter{equation}{0}
%It is natural to consider the limit~$\e \to 0 $ of the solution~$u^\e$ constructed in Theorem~\ref{theoquasi2D}. If there
%is no doubt that the limit of~$u^{\e,h}$ is the solution of the 2D, periodic, Navier-Stokes equation with initial data~$v^{h}_{0} (x_{h},0)$
%(simply due to the stability of the 2D Navier-Stokes equations), the limit of~$u^{\e,3}$
%is not so clear. Indeed we have proved in this paper that~$u^{\e,3} - v^{\e,3}_{app}$ converges to zero
% in~$L^\infty(\R^+;\dot H^{ \frac 1 2}(\R^3)) \cap L^2(\R^+;\dot H^{ \frac 3 2}(\R^3)) $, and we 
%recall that~$ v^{\e,3}_{app} (t,x_{h},x_{3}) = \underline{w}^{\e,3} (t,x_{h},\e x_{3})$, which satisfies
%$$
%\partial_t \wp^{\e,3} + \vp^h \cdot \nabla_h \wp^{\e,3}  -\Delta_h  \wp^{\e,3}  -\e^2\partial_3^2\wp^{\e,3} =
%0
%$$

%%%%%%%%%%%%%%%%%%%%%%%%%%%%%%%%%%%%%%%%%%%%%%%%%%%%%%%%%%%%%%%
%%% BIBLIO
%%%%%%%%%%%%%%%%%%%%%%%%%%%%%%%%%%%%%%%%%%%%%%%%%%%%%%%%%%%%%

\end{document}